\theoremstyle{plain}
\newtheorem{theorem}{Theorem}[section]
\newtheorem{lemma}[theorem]{Lemma}
\newtheorem{proposition}[theorem]{Proposition}
\newtheorem{corollary}[theorem]{Corollary}
\theoremstyle{definition}
\newtheorem{definition}[theorem]{Definition}
\newtheorem{remark}[theorem]{Remark}
\newtheorem{example}[theorem]{Example}
\newenvironment{claim 1}[1]
{\par
\underline{Claim 1:}\space#1}{}
\newenvironment{claim 2}[1]
{\par
\underline{Claim 2:}\space#1}{}
\newenvironment{claim 3}[1]
{\par
\underline{Claim 3:}\space#1}{}
\newenvironment{claim}[1]
{\par
\underline{Claim:}\space#1}{}
\newenvironment{claimproof}[1]{\par\noindent\underline{Proof:}\space#1}{\hfill $\blacksquare$}
\numberwithin{equation}{section}
\title[Surjectivity of \(\overline{\partial}\) and \(\Delta\) on weighted spaces of  \(C^{\infty}\)-functions ]{On the surjectivity of the Cauchy-Riemann and Laplace operators on weighted spaces of smooth functions}
\author[A. Debrouwere]{Andreas Debrouwere}
\address{A. Debrouwere, Department of Mathematics and Data Science \\ Vrije Universiteit Brussel, Belgium\\ Pleinlaan 2 \\ 1050 Brussels \\ Belgium}
\email{Andreas.Debrouwere@vub.be}
\author[Q. Van Boxstael]{Quinten Van Boxstael}
\address{Q. Van Boxstael,  Department of Mathematics and Data Science \\ Vrije Universiteit Brussel, Belgium\\ Pleinlaan 2 \\ 1050 Brussels \\ Belgium}
\email{Quinten.Van.Boxstael@vub.be}
\author[J. Vindas]{Jasson Vindas}
\thanks {J. Vindas acknowledges support by the Ghent University grant  bof/baf/4y/2024/01/155 and the Research Foundation–Flanders grant 
G067621N}
\address{J. Vindas, Department of Mathematics: Analysis, Logic and Discrete Mathematics\\ Ghent University\\ Krijgslaan 281\\ 9000 Ghent\\ Belgium}
\email{Jasson.Vindas@UGent.be}
\subjclass[2020]{\emph{Primary.} 35A01, 46E10.  \emph{Secondary.} 35A35, 35J05. }
\keywords{Surjectivity of the Cauchy-Riemann and Laplace operator; weighted spaces of smooth functions; weighted Runge type approximation theorem}
\begin{document}
\begin{abstract}
    We study the surjectivity of the Cauchy-Riemann and Laplace operators on certain  weighted spaces of smooth functions of rapid decay on strip-like domains in the complex plane that are defined via 
    weight function systems. We fully characterize when these operators are surjective on such function spaces in terms of a growth condition on the defining  weight function systems.
    \end{abstract}
\maketitle
\section{Introduction}
Characterizing when a constant coefficient partial differential operator (PDO) on $\mathbb{R}^d$ is surjective on a given space
of functions or distributions is a fundamental problem in functional analysis that goes back to the pioneering
works of Ehrenpreis \cite{Ehrenpreis}, Malgrange \cite{Malgrange}, and H\"ormander \cite{Hormander-intro}. This question has been extensively studied for local spaces, e.g.\ the space $C^\infty(X)$ of smooth functions, the space $\mathcal{D}'(X)$ of distributions, or the spaces $B^{\operatorname{loc}}_{p,k}(X)$, where $X \subseteq \mathbb{R}^d$ is open; see H\"ormander's monograph \cite{HoermanderPDO2}.

The surjectivity problem is however much less understood for global weighted spaces.  
Let us mention a few results in this direction.  A classical result, independently shown by H\"ormander \cite{Hormander-div} and {\L}ojasiewicz  \cite{Loja}, states that every non-zero  PDO is surjective on the space $\mathcal{S}'(\mathbb{R}^d)$ of tempered distributions. In \cite{Larcher,O-W}, the surjectivity of PDO on the space $\mathcal{O}_M(\mathbb{R}^d)$  of slowly increasing smooth functions was studied. The related question concerning the existence of continuous linear right inverses of PDO (and, more generally, convolution operators) on various weighted function and (ultra)distribution spaces has been thoroughly investigated by Langenbruch, see \cite{Langenbruch} and references therein.

In this paper, we study the surjectivity of the Cauchy-Riemann operator \(\displaystyle \overline{\partial} = \frac{1}{2}\left(\frac{\partial}{\partial x} + i \frac{\partial}{\partial y}\right)\)  and the Laplace operator  \(\displaystyle \Delta = \frac{\partial^2}{\partial x^2} + \frac{\partial^2}{\partial y^2}\) on 
 weighted spaces of smooth functions 
 on certain open subsets of 
 $\mathbb{C}$. More precisely, we  introduce the weighted spaces $\mathcal{K}_W(X)$ of smooth functions of rapid decay on $X$, where  $X \subseteq \mathbb{C}$ is a generalized strip (which can  more or less be seen as a generalization of a horizontal strip of the form $\mathbb{R} + i(-h,h)$, $h >0$, but that may have a uniformly continuous curve as boundary) 
  and $W$ is a weight function system (measuring how fast the functions in $\mathcal{K}_W(X)$ decay at infinity); see Section \ref{Prelimaries} for the precise definition of $\mathcal{K}_W(X)$.  Our goal is then to characterize when \(\displaystyle \overline{\partial}\) and $\Delta$ are surjective on $\mathcal{K}_W(X)$ in terms of a growth condition on the defining weight function system $W$. Our results complement and extend the recent work of Kruse \cite{Kruse1} on this problem for the Cauchy-Riemann operator (see also the related papers \cite{Kruse2,Kruse3}).

We now state a sample of our main result. We define the horizontal strip \(T_h = \mathbb{R} + i(-h,h)\) for $h \in (0,\infty]$. \begin{theorem}\label{theorem 1}\label{Theorem intro CR and L}
    Let \(w \colon [0,\infty) \rightarrow  [0,\infty)\) be a non-decreasing unbounded function. Suppose that there is \(C>1\) such that  
$$
\int_0^{\infty} e^{w(t)-w(Ct)}\text{d}t < \infty.
$$
Let $h\in (0,\infty].$ 
 Define \(\mathcal{K}_{(w)}(T_h)\) as the space consisting of all \(f \in C^{\infty}(T_h)\) such that 
\[\sup_{z\in \overline{T_{h^{\prime}}}}e^{w(N\lvert \operatorname{Re  }z\rvert)} \lvert f^{(\alpha)}(z)\rvert < \infty, \qquad \forall N \in  \mathbb{N}, \alpha \in \mathbb{N}^2, h' \in (0,h).\]
Then, the following statements are equivalent:
    \begin{enumerate}[label=(\roman*)]
        \item The Cauchy-Riemann operator \(\overline{\partial} \colon \mathcal{K}_{(w)}(T_h) \rightarrow \mathcal{K}_{(w)}(T_h)\) is surjective.
        \item The Laplace operator \(\Delta \colon \mathcal{K}_{(w)}(T_h) \rightarrow \mathcal{K}_{(w)}(T_h)\) is surjective.
        \item There exists a holomorphic function in \(\mathcal{K}_{(w)}(T_h)\) that is not identically zero.
        \item For all $\mu > 0$,        
        $$
        \displaystyle \int_0^{\infty} w(t) e^{-\mu t}\mathrm{d}t < \infty.
        $$
    \end{enumerate}
\end{theorem}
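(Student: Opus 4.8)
The plan is to establish the cycle $(\mathrm{iv})\Rightarrow(\mathrm{iii})\Rightarrow(\mathrm{i})\Rightarrow(\mathrm{ii})\Rightarrow(\mathrm{iv})$, together with the reverse $(\mathrm{iii})\Rightarrow(\mathrm{iv})$, which yields the full equivalence. A preliminary reformulation is useful: integrating by parts and using that $w$ is non-decreasing, condition $(\mathrm{iv})$ is equivalent to $\log w(t)/t\to 0$, i.e. $w(t)=e^{o(t)}$; this subexponential form is what will actually be used. Throughout, the standing hypothesis $\int_0^\infty e^{w(t)-w(Ct)}\,\mathrm dt<\infty$ enters to guarantee that $\mathcal K_{(w)}(T_h)$ is well behaved — in particular that it is stable, up to a shift in the index $N$, under multiplication by polynomials and by the building-block functions below, and that the Cauchy/Hörmander-type integrals occurring in the construction converge.

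$(\mathrm{iv})\Leftrightarrow(\mathrm{iii})$. For $(\mathrm{iv})\Rightarrow(\mathrm{iii})$ with $h<\infty$ I would simply exhibit $\psi(z):=\exp(-\cosh(\lambda z))$ for any $0<\lambda<\pi/(2h)$: it is holomorphic and nowhere vanishing on $T_h$, and since $|\psi(x+iy)|=e^{-\cosh(\lambda x)\cos(\lambda y)}\le e^{-c_{h'}\cosh(\lambda x)}$ on $\overline{T_{h'}}$ (with $h'<h$, $c_{h'}>0$), while by $(\mathrm{iv})$ the weight $e^{w(N|x|)}$ grows slower than $e^{\varepsilon\cosh(\lambda x)}$ for every $\varepsilon>0$, all weighted seminorms of $\psi$ and its derivatives are finite; thus $\psi\in\mathcal K_{(w)}(T_h)\setminus\{0\}$. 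For $h=\infty$ one replaces $\cosh(\lambda z)$ by a suitable entire function $G(z)=\sum_k a_kz^{2k}$ whose real part is comparable to $w(|\cdot|)$ along horizontal strips, or reduces to the finite-width case by a diagonal argument. Conversely, given a nonzero holomorphic $f\in\mathcal K_{(w)}(T_h)$, for each $h'<h$ one has $f\in H^\infty(T_{h'})$, so by the Phragmén–Lindelöf principle for the strip $-\log|f(x+iy_0)|\le C\exp(\pi|x|/(2h'))+o(\exp(\pi|x|/(2h')))$ for $|y_0|<h'$; since $-\log|f(x+iy_0)|\ge w(N|x|)-C_N$ for every $N$, this forces $w(s)\le C\exp(\varepsilon s)+C_\varepsilon$ for every $\varepsilon>0$, i.e. $\log w(s)=o(s)$, which is $(\mathrm{iv})$.

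$(\mathrm{iii})\Rightarrow(\mathrm{i})$ and $(\mathrm{i})\Rightarrow(\mathrm{ii})$. This is the heart of the matter. Assuming $(\mathrm{iii})$, hence $(\mathrm{iv})$, the nowhere-zero $\psi$ above is available. Given $g\in\mathcal K_{(w)}(T_h)$, a $C^\infty$ solution of $\overline{\partial}u=g$ is produced by the Cauchy–Pompeiu formula (or Hörmander's $L^2$ estimate on $T_h$), but it only decays like $|z|^{-1}$, so it must be corrected by a holomorphic function to land in $\mathcal K_{(w)}(T_h)$. Following the standard exhaustion scheme — writing $T_h=\bigcup_m\Omega_m$ with $\Omega_m=T_h\cap\{|\operatorname{Re}z|<m\}$, solving $\overline{\partial}$ on each $\Omega_m$, and gluing — this reduces to a \emph{weighted Runge-type approximation theorem}: the elements of $A_{(w)}(T_h):=\ker\overline{\partial}\cap\mathcal K_{(w)}(T_h)$ are dense, in the relevant weighted topology, in the space of holomorphic functions on $\Omega_m$ carrying the appropriate weighted bounds, for every $m$. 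Here $\psi$ enters: since $\mathcal K_{(w)}(T_h)$ is invariant under real translations and under multiplication by polynomials and by translates $\psi(\cdot-a)$, once $A_{(w)}(T_h)\neq\{0\}$ one has enough holomorphic functions of rapid decay to carry out the approximation (whereas if $A_{(w)}(T_h)=\{0\}$ no such correction exists). Granting surjectivity of $\overline{\partial}$, $(\mathrm{ii})$ follows quickly: $\mathcal K_{(w)}(T_h)$ is stable under complex conjugation, so $\partial_z=\overline{\,\overline{\partial}(\overline{\,\cdot\,})\,}$ is surjective as well; writing $\Delta=4\partial_z\overline{\partial}$, one solves $\Delta u=g$ by solving $\partial_z v=g/4$ and then $\overline{\partial}u=v$. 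I expect the Runge/correction step to be the genuine obstacle: naive $L^2$-estimates for $\overline{\partial}$ have the wrong sign to produce decaying solutions, so the corrector has to be controlled quantitatively in terms of $\psi$.

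$(\mathrm{ii})\Rightarrow(\mathrm{iv})$. For the remaining implication (and, symmetrically, $(\mathrm{i})\Rightarrow(\mathrm{iv})$) I would argue by contraposition. If $\log w(t)/t\not\to0$, choose $g\in C_c^\infty(T_h)\subseteq\mathcal K_{(w)}(T_h)$ with $\int_{T_h}g\,\mathrm dA\neq0$ and suppose $\Delta u=g$ with $u\in\mathcal K_{(w)}(T_h)$. Then $u$ is harmonic on $T_h\setminus\operatorname{supp}g$, rapidly decreasing on every $\overline{T_{h'}}$, and carries nonzero flux $\tfrac1{2\pi}\int_{T_h}g\,\mathrm dA$ around $\operatorname{supp}g$; equivalently $2\partial_z u$ is holomorphic off $\operatorname{supp}g$, rapidly decreasing, with non-vanishing contour integral around $\operatorname{supp}g$. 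A quantitative Phragmén–Lindelöf argument on the finite-width strip then shows that a function decaying faster than $e^{-w(N|x|)}$ for every $N$ while being "nontrivial at the hole" cannot exist unless $w(s)=O(e^{\varepsilon s})$ for all $\varepsilon>0$ — that is, unless $(\mathrm{iv})$ holds; this contradiction closes the cycle. (For $h=\infty$ the Phragmén–Lindelöf input is applied on sub-strips $T_{h'}$ with a limiting argument.) The decisive ingredient on this side is again the sharp growth/decay dichotomy for subharmonic functions on a strip of finite width.
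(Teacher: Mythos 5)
Your overall architecture (the cycle of implications, the reduction of $\Delta$ to $\overline{\partial}$ via $\Delta=4\partial\overline{\partial}$, and the reduction of surjectivity to a weighted Runge-type approximation plus local solvability) matches the paper's, and two of your steps are genuinely nice alternatives: the explicit function $\exp(-\cosh(\lambda z))$, $0<\lambda<\pi/(2h)$, does give $(iv)\Rightarrow(iii)$ for $h<\infty$ more concretely than the paper, and your flux argument for $(ii)\Rightarrow(iv)$ (extracting a nonzero rapidly decreasing holomorphic function $\partial_z u$ on a substrip avoiding $\operatorname{supp}g$) is a workable substitute for the paper's open-mapping/Grothendieck argument (Proposition 5.1). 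However, the central implication --- that $(iv)$ implies surjectivity of $\overline{\partial}$ --- is not proved: you reduce it to a weighted Runge theorem and to the existence of a correctable local solution, and then explicitly flag this as ``the genuine obstacle'' without overcoming it. The missing idea, which is the paper's main technical device, is the holomorphic damping factor: condition $(iv)$ yields (Lemma \ref{Lemma 2}, i.e.\ \cite[Prop.~3.2]{debrouwere2018non}) a function $Q\in\mathcal{H}(T_h)$ with $\lvert Q(\xi)\rvert\geq e^{w(K\lvert\operatorname{Re}\xi\rvert)}$; setting $P=Q(0)/Q$, one replaces the Cauchy kernel $1/(z-\xi)$ by $P(z-\xi)/(z-\xi)$, which still reproduces $f$ (since $P(0)=1$) but decays like $e^{-w(K\lvert\operatorname{Re}(z-\xi)\rvert)}$. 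This is what makes (a) the contour representation of $f$ over the unbounded boundary of a substrip converge, (b) the Riemann-sum/pole-pushing approximation land in $\mathcal{U}_{w_K}(T^{bF,bG})$, and (c) the local solution $f=(g\varphi)*(P(z)/(\pi z))$ belong to the weighted space --- precisely the step where your Cauchy--Pompeiu solution ``only decays like $\lvert z\rvert^{-1}$'' and you have no mechanism to repair it. Note also that the standing hypothesis $\int_0^\infty e^{w(t)-w(Ct)}\,\mathrm{d}t<\infty$ is never actually used in your argument, whereas in the paper it is exactly what makes the tail of the contour integral and the convolution integral finite; a proof that does not invoke it somewhere in this step cannot be complete.

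Two smaller gaps: for $h=\infty$ your $(iv)\Rightarrow(iii)$ is not established --- $\exp(-\cosh(\lambda z))$ has no entire analogue decaying on every horizontal strip, and a ``diagonal argument'' over finite widths does not produce a single entire function (the paper handles this case through Remark \ref{rm infinity case} and the construction of \cite{debrouwere2018non}). And in $(iii)\Rightarrow(iv)$ the pointwise Phragm\'en--Lindel\"of bound you state for $-\log\lvert f(x+iy_0)\rvert$ is false as written (holomorphic functions in the space may have zeros, where $-\log\lvert f\rvert=+\infty$); the correct input is the integrated statement $\int_{-\infty}^{\infty}\log\lvert f(t)\rvert e^{-\pi\lvert t\rvert/h}\,\mathrm{d}t>-\infty$ of \cite[Lemma 3.4]{debrouwere2018non}, from which $\int_0^\infty w(Nt)e^{-\pi t/h}\,\mathrm{d}t<\infty$ for every $N$ and every admissible $h$ follows, giving $(iv)$ for all $\mu>0$.
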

The main improvement of Theorem \ref{theorem 1} upon the  work of Kruse \cite{Kruse1} is that the conditions on the weight function $w$ are much less restrictive than in  \cite{Kruse1}, leading to a new full characterization of the surjectivity of \(\overline{\partial}\) and \(\Delta\) on  $\mathcal{K}_{(w)}(T_h)$. As a concrete example, we mention that  Theorem \ref{theorem 1}  implies that  \(\overline{\partial}\) and \(\Delta\) are surjective on  $\mathcal{K}_{(w)}(T_h)$ for $w(t) =  t^a$ with \(a > 0\). This was previously shown in  \cite[Example 5.7]{Kruse1} for the Cauchy-Riemann operator, but under the additional assumption 
 \(a  \leq 1\). On the other hand, 
 it is worth pointing out that \cite[Example 5.7]{Kruse1} holds for spaces defined on a broader 
  class of open sets than (generalized) strips. 

The principal tool in the proof of Theorem \ref{theorem 1}  (and, more generally, in that of our main result Theorem \ref{main theorem}) is a novel weighted version of the classical Runge approximation theorem, which we also show here and  may be of independent interest. This makes our proof method elementary and constructive. Furthermore, it is  different from the approach used in \cite{Kruse1}, which is based on the Hahn-Banach theorem and is inspired by H\"ormander's solution  
 of the \(\overline{\partial}\)-problem in weighted $L^2$-spaces.

The outline of this paper is as follows. In the preliminary Section \ref{Prelimaries}, we  introduce the weighted spaces of smooth functions that  we are interested in and recall the abstract Mittag-Leffler lemma for Fr\'echet spaces. Next, in Section 
\ref{main section}, we discuss our main result (Theorem \ref{main theorem}) and give various examples illustrating it.  The main ingredient in the proof of Theorem \ref{main theorem}, our new Runge type approximation result for weighted spaces of smooth functions, is shown in Section \ref{Runge type result}.
Finally, in Section \ref{proof main theorem},  we prove 
 Theorem \ref{main theorem}.
\section{Preliminaries}\label{Prelimaries}
In this preliminary section, we introduce generalized strips, weight function systems, and the weighted spaces  of smooth functions of rapid decay  that we shall be concerned with. We also explain the abstract Mittag-Leffler lemma for Fr\'echet spaces.

\subsection{Generalized strips}
The following special kind of open sets in $\mathbb C$ will play a fundamental role in this article. 
\begin{definition}
We write  \(\mathcal{F}(\mathbb{R})\)  for the family of all functions $F: \mathbb{R} \to (0,\infty)$ that are uniformly continuous and satisfy   \(0< \inf_{t\in \mathbb{R}} F(t) \leq \sup_{t\in \mathbb{R}} F(t)<\infty \). 
Given \(F, G \in \mathcal{F}(\mathbb{R})\), we define the \emph{generalized strip} \(T^{F,G}\) as
 \[T^{F,G}= \{z \in \mathbb{C} \, \vert \, -G(\mathrm{Re \ }z) < \mathrm{Im \ }z < F(\mathrm{Re \ }z)\} .\]
\end{definition}

We define  the horizontal strip \(T_h = \mathbb{R} + i(-h,h)\) for $h \in (0,\infty)$. The following two lemmas will be used later on. They show that, in a certain sense, generalized strips are well-separated in distance and by the graphs of two \(C^{\infty}\)-functions.
\begin{lemma}\label{Lemma 1}
    Let \(F, G \in \mathcal{F}(\mathbb{R})\) and \(a \in (0,1)\). Then, there is \(\varepsilon > 0\) such that 
    $$
    \overline{T^{a F, a G}} + \overline{B}(0, \varepsilon) \subseteq T^{F,G}.
    $$
\end{lemma}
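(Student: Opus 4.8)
The plan is to exploit the two defining features of the class $\mathcal{F}(\mathbb{R})$ that go beyond merely producing an open set: the functions $F$ and $G$ are \emph{uniformly} continuous, and they are bounded below by a strictly positive constant. Fix $a \in (0,1)$ and set $m := \min\{\inf_{t\in\mathbb{R}} F(t),\, \inf_{t\in\mathbb{R}} G(t)\} > 0$. I would first record the elementary description of the closure,
\[
\overline{T^{aF,aG}} = \{ z \in \mathbb{C} : -aG(\operatorname{Re} z) \leq \operatorname{Im} z \leq aF(\operatorname{Re} z) \},
\]
in which "$\subseteq$" is immediate from the continuity of $F$ and $G$, while "$\supseteq$" follows by perturbing a point lying on one of the two boundary graphs slightly into the interior and invoking continuity once more.

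Next, I would use uniform continuity to pick $\delta > 0$ such that $|F(x+u) - F(x)| < (1-a)m/2$ and $|G(x+u) - G(x)| < (1-a)m/2$ for every $x \in \mathbb{R}$ and every $u \in \mathbb{R}$ with $|u| \leq \delta$ — the uniformity of $\delta$ in $x$ being the whole point — and then set $\varepsilon := \min\{\delta, (1-a)m/2\}$. To check this $\varepsilon$ works, take $z = x + iy$ with $-aG(x) \leq y \leq aF(x)$ and $\zeta = u+iv$ with $|\zeta| \leq \varepsilon$, so that $|u| \leq \delta$ and $|v| \leq \varepsilon$. Using $y \leq aF(x)$, $|v| \leq \varepsilon$, $F(x) \geq m$, and $F(x+u) \geq F(x) - |F(x+u)-F(x)|$, one gets
\[
F(\operatorname{Re}(z+\zeta)) - \operatorname{Im}(z+\zeta) \;\geq\; (1-a)F(x) - |F(x+u)-F(x)| - \varepsilon \;>\; (1-a)m - \tfrac{(1-a)m}{2} - \tfrac{(1-a)m}{2} \;=\; 0,
\]
so $\operatorname{Im}(z+\zeta) < F(\operatorname{Re}(z+\zeta))$; the symmetric computation with $G$ (using $y \geq -aG(x)$) yields $\operatorname{Im}(z+\zeta) > -G(\operatorname{Re}(z+\zeta))$. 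Hence $z+\zeta \in T^{F,G}$, which is exactly the claimed inclusion.

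I do not expect a genuine obstacle here: the content of the lemma is simply that the vertical gap between the $a$-scaled strip and the full strip stays bounded below uniformly as $\operatorname{Re} z \to \pm\infty$, and this is precisely what uniform continuity of $F$ and $G$ together with their positive lower bounds guarantee — with mere continuity the admissible $\varepsilon$ could degenerate to $0$ at infinity. The only mildly fussy point is the closure identity above, which I would dispatch in a line or two.
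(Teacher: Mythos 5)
Your proof is correct and follows essentially the same route as the paper's: both arguments hinge on the uniform continuity of $F$ and $G$ together with their strictly positive infima to produce a single $\varepsilon$ that works for all $x\in\mathbb{R}$, differing only in cosmetic details (you verify the inclusion for an arbitrary point of the closure, while the paper first reduces to the two boundary graphs; the particular constants, $\tfrac{(1-a)m}{2}$ versus $\tfrac{1-a}{1+a}\inf F$, are immaterial).
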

\begin{proof}
We just need to prove that there is \(\varepsilon > 0\) such that \(\{x+iaF(x)\mid x \in \mathbb{R})\} + \overline{B}(0,\varepsilon) \subseteq T^{F,G}\) and \(\{x-iaG(x)\mid x \in \mathbb{R}\} + \overline{B}(0,\varepsilon) \subseteq T^{F,G}\). We only show the former  inclusion as the latter
 can be proved in a completely analogous way. 
 Choose \( \displaystyle0 < \varepsilon^{\prime} < \frac{1-a}{1+a} \inf_{t \in \mathbb{R}} F(t)\). Since \(F\) is uniformly continuous on \(\mathbb{R}\), there is \(\delta > 0\) such that 
\begin{equation}
    \forall t,t^{\prime} \in \mathbb{R}: \lvert t - t^{\prime}\rvert \leq \delta \implies \lvert F(t)-F(t^{\prime})\rvert \leq \varepsilon^{\prime}.
\end{equation}
Pick \(0<\varepsilon < \min \{\delta, \varepsilon^{\prime}, a \inf_{t\in \mathbb{R}} F(t)\}\). Let $x \in \mathbb{R}$ be arbitrary. Let \(x^{\prime}+iy^{\prime} \in \overline{B}(x+iaF(x), \varepsilon)\). 
It is clear that \(-G(x^{\prime})<0<y^{\prime}\) and
\begin{align*}
    y^{\prime} &\leq \lvert y^{\prime}-aF(x)\rvert + aF(x) < \varepsilon^{\prime} + a\lvert F(x)-F(x^{\prime})\rvert + a F(x^{\prime}) \\
    &\leq \varepsilon^{\prime}(1+a) + a F(x^{\prime}) \leq (1-a)F(x^{\prime}) + aF(x^{\prime}) = F(x^{\prime}).
\end{align*}
Hence, \(\{x+iaF(x)\vert x \in \mathbb{R}\} + \overline{B}(0,\varepsilon) \subseteq T^{F,G}\).
\end{proof}
\begin{lemma}\label{smooth interpolation}
Let \(F,G \in \mathcal{F}(\mathbb{R})\) and \(a \in (0,1)\). Then, there are \(\varepsilon > 0\) and \(\Phi, \Psi \in \mathcal{F}(\mathbb{R})\cap C^{\infty}(\mathbb{R})\), with all derivatives bounded, 
   such that \(\overline{T^{aF,aG}} + \overline{B}(0,\varepsilon) \subseteq T^{\Phi, \Psi}\) and \(\overline{T^{\Phi,\Psi}} + \overline{B}(0,\varepsilon) \subseteq T^{F,G}\).   
\end{lemma}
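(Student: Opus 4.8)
The plan is to build $\Phi$ and $\Psi$ by mollifying slightly shrunk copies of $F$ and $G$, and then to separate the relevant curves by an $\varepsilon$-collar exactly as in the proof of Lemma~\ref{Lemma 1}. Concretely, I would set $F_0:=\frac{1+a}{2}F$ and $G_0:=\frac{1+a}{2}G$; then $F_0,G_0\in\mathcal F(\mathbb R)$ and, with $m:=\frac{1-a}{2}\min\{\inf_{\mathbb R}F,\inf_{\mathbb R}G\}>0$, one has the pointwise inequalities
$$ aF+m\le F_0\le F-m,\qquad aG+m\le G_0\le G-m \quad\text{on }\mathbb R. $$
Next, fix a mollifier $\rho\in C_c^\infty(\mathbb R)$ with $\rho\ge 0$, $\int\rho=1$, put $\rho_\eta(t)=\eta^{-1}\rho(t/\eta)$, and define $\Phi:=F_0*\rho_\eta$, $\Psi:=G_0*\rho_\eta$ for a parameter $\eta>0$ still to be chosen small.

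First I would verify that $\Phi,\Psi\in\mathcal F(\mathbb R)\cap C^\infty(\mathbb R)$ with all derivatives bounded: smoothness and the bound $\|\Phi^{(k)}\|_\infty\le\|F_0\|_\infty\|\rho_\eta^{(k)}\|_{L^1}<\infty$ are immediate; $\rho_\eta\ge 0$ and $\int\rho_\eta=1$ give $\inf_{\mathbb R}F_0\le\Phi\le\sup_{\mathbb R}F_0$, so $0<\inf\Phi\le\sup\Phi<\infty$; and $\Phi$ is uniformly continuous since it is Lipschitz (same for $\Psi$). Because $F_0$ is uniformly continuous one has $\|\Phi-F_0\|_\infty\le\omega_{F_0}(c\eta)\to 0$ as $\eta\to0^+$ (with $\operatorname{supp}\rho\subseteq[-c,c]$ and $\omega_{F_0}$ the modulus of continuity of $F_0$), and likewise $\|\Psi-G_0\|_\infty\to 0$; so I would fix $\eta$ small enough that $\|\Phi-F_0\|_\infty,\|\Psi-G_0\|_\infty<m/2$. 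This upgrades the displayed inequalities to
$$ aF+\tfrac m2\le\Phi\le F-\tfrac m2,\qquad aG+\tfrac m2\le\Psi\le G-\tfrac m2 \quad\text{on }\mathbb R. $$

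It then remains to choose $\varepsilon>0$. Let $\omega_F,\omega_G,\omega_\Phi,\omega_\Psi$ be the moduli of continuity of the now-fixed functions $F,G,\Phi,\Psi$, and pick $\varepsilon>0$ small enough that $\varepsilon+\omega_\bullet(\varepsilon)<m/2$ for each of these four, possible since each $\omega_\bullet(\varepsilon)\to0$ as $\varepsilon\to0^+$. To check the first inclusion, write $z=x'+iy'=\zeta+b$ with $\zeta=x+iy\in\overline{T^{aF,aG}}$ and $|b|\le\varepsilon$, so $|x-x'|\le\varepsilon$, $|y-y'|\le\varepsilon$, $-aG(x)\le y\le aF(x)$. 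Then $y'\le aF(x)+\varepsilon$, while $\Phi(x')\ge\Phi(x)-\omega_\Phi(\varepsilon)\ge aF(x)+\tfrac m2-\omega_\Phi(\varepsilon)>aF(x)+\varepsilon\ge y'$, and symmetrically $y'\ge-aG(x)-\varepsilon>-\Psi(x')$, whence $z\in T^{\Phi,\Psi}$; this is the same collar estimate as in Lemma~\ref{Lemma 1}. The second inclusion is entirely analogous, now using $\Phi\le F-\tfrac m2$ and $\Psi\le G-\tfrac m2$ together with $\omega_F,\omega_G$.

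There is no genuine obstacle here — the proof is a routine combination of standard mollification estimates with the collar estimate of Lemma~\ref{Lemma 1} — but the one point that requires care is the order of the choices: one must fix $F_0,G_0$, then fix $\eta$ (hence $\Phi,\Psi$ and their moduli of continuity), and only afterwards fix $\varepsilon$, since otherwise the conditions $\varepsilon+\omega_\Phi(\varepsilon)<m/2$, etc., would be circular.
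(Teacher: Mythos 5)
Your proposal is correct and follows essentially the same route as the paper: mollify a rescaled copy of $F$ and $G$ (your $c=\tfrac{1+a}{2}$ in place of the paper's arbitrary $c\in(a,1)$), use uniform continuity to make the mollification error small, and then run the collar estimate of Lemma~\ref{Lemma 1}. The only difference is bookkeeping — you track explicit vertical margins $m$ and moduli of continuity where the paper invokes Lemma~\ref{Lemma 1} twice with $2\varepsilon$-collars — and your ordering of the choices of $\eta$ and $\varepsilon$ is handled correctly.
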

\begin{proof}
Take \(c \in (a,1)\) arbitrary. By Lemma \ref{Lemma 1}, we can choose \(\varepsilon > 0\) such that  \(\overline{T^{aF,aG}} + \overline{B}(0,2\varepsilon) \subseteq T^{cF,cG}\) and \(\overline{T^{cF,cG}} + \overline{B}(0,2\varepsilon) \subseteq T^{F,G} \label{strip plus ball}\). 
Since \(F\) and \(G\) are uniformly continuous on $\mathbb{R}$, there is \(\delta > 0\) such that, for all $t,t' \in \mathbb{R}$,  \[\lvert t-t^{\prime}\rvert \leq \delta \implies \lvert c F(t)-c F(t^{\prime})\rvert \leq \varepsilon \text{ and } \lvert c G(t) -c G(t^{\prime})\rvert \leq \varepsilon .\] Choose now \(\varphi \in C^{\infty}(\mathbb{R}^2)\) with \(\operatorname{ supp } \varphi \subseteq B(0,\delta)\), \(\varphi \geq 0\), \(\|\varphi\|_{L^1(\mathbb{R}^2)} = 1\), and define \(\Phi = cF*\varphi\), \(\Psi = cG*\varphi\). It is clear that \(\Phi, \Psi \in C^{\infty}(\mathbb{R})\) and that all derivatives of \(\Phi, \Psi\) are bounded. By the mean value theorem, the latter implies that \(\Phi\) and \(\Psi\) are uniformly continuous. Using
 the estimates \(\|cF-\Phi\|_{L^{\infty}(\mathbb{R})} \leq\)  \(\sup_{x\in\mathbb{R}}\| cF(x) \varphi - cF(x-\cdot)\varphi\|_{L^1(\overline{B}(0,\delta))} \leq \varepsilon\)
 and \(\|cG-\Psi\|_{L^{\infty}(\mathbb{R})} \leq \varepsilon\), 
 it follows that \(\Phi, \Psi \in \mathcal{F}(\mathbb{R})\), \(\overline{T^{aF,aG}} + \overline{B}(0,\varepsilon) \subseteq T^{\Phi,\Psi}\), and \(\overline{T^{\Phi,\Psi}} + \overline{B}(0,\varepsilon) \subseteq T^{F,G}\). 

\end{proof}

\subsection{Weight function systems}
We will measure the decay of functions at infinity via so called weight function systems, a concept that is introduced in the following definition.
\begin{definition}
 A function \(w \colon [0,\infty) \rightarrow  [0,\infty)\) is called a \emph{weight function} if it is non-decreasing and unbounded. A pointwise non-decreasing sequence  \( W = (w_N)_{N \in \mathbb{N}}\)  of weight functions is called a \emph{weight function system}.  
\end{definition}

We shall employ the following conditions on a weight function system  \( W = (w_N)_{N \in \mathbb{N}}\):
\begin{enumerate}
    \item [\((\alpha)\)] \(\forall N \in \mathbb{N} \, \exists M > N, A > 1 \, \forall t\geq 0 : w_N(2t) \leq w_M(t) + \log A\).  
      \\
    \item [\((\epsilon)_0\)] \(\forall N \in \mathbb{N} \, \forall \mu > 0: \displaystyle{\int_0^{\infty} w_N(t) e^{-\mu t}\mathrm{d}t < \infty}\).
  \item [ \((N)\)] \(\forall N \in \mathbb{N}\, \exists M > N: \displaystyle{\int_{0}^{\infty} e^{w_N(t) - w_M(t)} \mathrm{d}t< \infty}\).
  \end{enumerate}
  
The conditions $(\alpha)$ and $(N)$ are of a technical nature, while \((\epsilon)_0\) shall play a fundamental role  in our considerations (see Theorem \ref{main theorem} 
below).
Condition $(\alpha)$ implies that $W$ is weakly subadditive in the following sense.

\begin{lemma}
\label{Subadditivity}
 Let \(W = (w_N)_{N \in \mathbb{N}}\) be a weight function system that satisfies \((\alpha)\). Then,
 \[
 \forall N \in \mathbb{N} \, \exists M > N, A > 1 \, \forall t,s \geq 0 : w_N(t+s) \leq w_M(t) + w_M(s) + \log A. 
 \]
\end{lemma}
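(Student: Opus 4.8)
The plan is to deduce the weak subadditivity from the dyadic condition $(\alpha)$ by a standard binary-expansion argument. First I would reduce the two-variable statement to an iterated application of $(\alpha)$: since for $t,s\geq 0$ we have $t+s\leq 2\max\{t,s\}$ and each $w_N$ is non-decreasing, it follows that $w_N(t+s)\leq w_N(2\max\{t,s\})$. Applying $(\alpha)$ once, there are $M>N$ and $A>1$ with $w_N(2\max\{t,s\})\leq w_M(\max\{t,s\})+\log A$. Finally, since $\max\{t,s\}\leq t+s$ and $w_M$ is non-decreasing, we could try $w_M(\max\{t,s\})\leq w_M(t)+w_M(s)$ — but this last inequality is false in general (e.g. $w_M$ could have a large positive value at $0$). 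The clean fix is to note $w_M(\max\{t,s\})\leq w_M(t)+w_M(s)+w_M(0)$, and absorb $w_M(0)$ into the constant: enlarge $A$ to $A'=Ae^{w_M(0)}$. This gives exactly the claimed inequality $w_N(t+s)\leq w_M(t)+w_M(s)+\log A'$.

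More carefully, here is the chain I would write out. Fix $N\in\mathbb{N}$ and let $M>N$, $A>1$ be as furnished by $(\alpha)$. Let $t,s\geq 0$ and put $r=\max\{t,s\}$, so $0\leq t+s\leq 2r$. Then
\[
w_N(t+s)\leq w_N(2r)\leq w_M(r)+\log A \leq w_M(t)+w_M(s)+w_M(0)+\log A,
\]
where the first inequality uses monotonicity of $w_N$, the second is $(\alpha)$, and the third uses monotonicity of $w_M$ (so $w_M(r)\leq w_M(t)+w_M(s)$ fails only by the additive constant $w_M(0)$, which we insert). Setting $A'=Ae^{w_M(0)}>1$ yields $w_N(t+s)\leq w_M(t)+w_M(s)+\log A'$ for all $t,s\geq 0$, which is the assertion.

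I do not anticipate any genuine obstacle here; the only subtlety worth flagging — and the one place a hasty proof would slip — is that $w_M(0)$ need not vanish, so one cannot directly split $w_M(\max\{t,s\})\leq w_M(t)+w_M(s)$ without paying the constant $w_M(0)$. Since the statement already allows an additive $\log A$ term, this costs nothing: we simply fold $w_M(0)$ into the constant. Everything else is monotonicity and the single invocation of $(\alpha)$.
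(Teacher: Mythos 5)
Your proof is correct and essentially the same one-line argument as the paper's, which uses non-negativity to write $w_N(t+s)\le w_N(2\max\{t,s\})\le w_N(2t)+w_N(2s)$ and then applies $(\alpha)$ to each summand, ending with $2\log A$ instead of your $\log A + w_M(0)$. The one ``subtlety'' you flag is actually a non-issue: since weight functions take values in $[0,\infty)$ and $w_M$ is non-decreasing, $w_M(\max\{t,s\})=\max\{w_M(t),w_M(s)\}\le w_M(t)+w_M(s)$ holds outright, so the $w_M(0)$ correction is unnecessary (though harmless).
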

\begin{proof}
    Take \(N \in \mathbb{N}\) arbitrary and choose \(M > N\) and  \(A > 1\) as in \((\alpha)\). Then, for all \(t,s \geq 0\),
    \[
    w_N(t+s) \leq w_N(2t) + w_N(2s) \leq w_M(t) + w_M(s) + 2\log A.
    \]
\end{proof}

Given $X  \subseteq \mathbb{C}$ open, we write $\mathcal{H}(X)$ for the space of holomorphic functions on \(X\). The following lemma shall be of crucial importance to us. 
\begin{lemma}\label{Lemma 2}
    Let \(W = (w_N)_{N \in \mathbb{N}}\) be a weight function system that satisfies \((\epsilon)_0\). Then, for all \(N \in \mathbb{N}\) and \(h > 0\), there is \(Q \in \mathcal{H}(T_h)\) such that 
    \[
    e^{w_N(\lvert \operatorname{Re}\xi\rvert)} \leq \lvert Q (\xi)\rvert, \qquad \forall \xi \in T_h.
    \]
\end{lemma}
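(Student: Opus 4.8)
\textbf{Proof proposal for Lemma \ref{Lemma 2}.}

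The plan is to construct $Q$ explicitly as a Fourier–Laplace type integral. The key idea is that condition $(\epsilon)_0$ says that $w_N(t)$ grows slower than any linear function in an integrated sense, which is exactly what is needed to make $\int_0^\infty e^{w_N(t)} e^{-\mu t}\, \mathrm{d}t$ converge for every $\mu>0$; such integrals are the prototypical way to manufacture entire functions with prescribed lower bounds on real subspaces. Concretely, I would first reduce to producing, for fixed $N$ and $h$, an entire function $q$ on $\mathbb{C}$ satisfying $e^{w_N(|\operatorname{Re}\zeta|)} \le |q(\zeta)|$ for all $\zeta$ in the horizontal strip $|\operatorname{Im}\zeta| < h$ (in fact it will be convenient to get such a bound on a slightly larger strip, or even on all of $\mathbb{C}$ with an appropriate exponential type in the imaginary direction). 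Since $w_N$ is even-in-$|\cdot|$ by construction only defined on $[0,\infty)$, one naturally seeks $q$ as a function of $\zeta^2$ or builds it symmetrically.

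The main construction: set
\[
g(\zeta) = \int_0^\infty e^{\,2w_N(2t)}\, e^{-\zeta t}\, \mathrm{d}t \qquad \text{or a variant thereof},
\]
which defines a holomorphic function on the right half-plane $\operatorname{Re}\zeta > 0$; the point of using $w_N(2t)$ rather than $w_N(t)$ is to leave room to absorb the factor $2$ lost when passing from $t$ to $2t$, using $(\alpha)$ in the form of Lemma \ref{Subadditivity}, though one may alternatively not need $(\alpha)$ at all and work directly. For $\xi$ with $\operatorname{Re}\xi = \sigma > 0$ one has the lower bound $|g(\xi)| \ge \operatorname{Re} g(\xi)$... no — a cleaner route is to note that for real $x>0$, $g(x) \ge \int_0^{1/x} e^{2w_N(2t)}\, e^{-xt}\,\mathrm{d}t \ge e^{-1}\, (1/x)\, e^{2w_N(2/x)}$, but I want a lower bound in terms of $e^{w_N(x)}$, so the substitution should be arranged so that the weight argument matches $|\operatorname{Re}\xi|$. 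Thus instead take
\[
g(\zeta) = \int_0^\infty e^{\,w_N(t)}\, e^{-\zeta t}\,\mathrm{d}t,
\]
holomorphic on $\operatorname{Re}\zeta>0$; then the reciprocal $1/g$ is holomorphic and bounded there, which is the wrong direction. So the correct object is really the function whose \emph{growth} (not decay) is controlled: one wants $|Q(\xi)| \ge e^{w_N}$, i.e. $Q$ large. I would therefore consider
\[
Q_0(\zeta) = \int_{\mathbb{R}} e^{\,w_N(2|t|)}\, e^{-t^2}\, e^{i \zeta t}\, \mathrm{d}t,
\]
which, thanks to $(\epsilon)_0$ (ensuring $e^{w_N(2|t|)}e^{-t^2}\in L^1$ and more), defines an entire function of $\zeta$ of order $\le 2$; one then shows the desired lower bound on the strip by a stationary-phase / saddle-point estimate or, more elementarily, by choosing a clever contour and using that $w_N$ is non-decreasing.

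\textbf{Key steps, in order.} (1) Use $(\epsilon)_0$ to verify that the chosen integral converges locally uniformly in $\zeta$ on the relevant region and hence defines a holomorphic function there; track the dependence on $\operatorname{Im}\zeta$ to see it extends holomorphically to the strip $T_h$ (and estimate the type in the imaginary direction — here the factor $e^{-t^2}$, or more generally a Gaussian-type damping, guarantees holomorphy on all of $\mathbb{C}$). (2) For $\xi \in T_h$ with $|\operatorname{Re}\xi| = \sigma$, produce the lower bound $|Q_0(\xi)| \gtrsim e^{c\, w_N(\sigma)}$ with some constant $c>0$ (possibly $c<1$) and a loss of a polynomial or exponential-in-$h$ factor; the natural mechanism is to shift the contour of integration in $t$ by an amount proportional to $\operatorname{Re}\xi$ to extract the dominant exponential, using the monotonicity of $w_N$ to bound the shifted integrand from below. (3) Repair the constants: replace $N$ by a larger index $M$ via $(\alpha)$/Lemma \ref{Subadditivity} so that a bound of the form $c\, w_M(\sigma) - \log A \ge w_N(\sigma)$ holds (equivalently rescale inside the integral from the start, replacing $w_N(t)$ by $\frac{1}{c} w_M(Ct)$), and absorb the lost polynomial factor into the weight using unboundedness of $w_N$ — or, if that is not cleanly possible, multiply $Q_0$ by a suitable polynomial or by another such integral. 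Set $Q$ to be this (possibly rescaled, possibly multiplied) function, restricted to $T_h$.

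\textbf{Main obstacle.} The delicate point is step (2): getting a clean \emph{lower} bound for an oscillatory (or one-sided Laplace) integral over the whole strip $T_h$ uniformly, with the weight argument exactly (up to the allowed dilation and constants) $|\operatorname{Re}\xi|$. Lower bounds for such integrals are always more subtle than upper bounds because of possible cancellation; the resolution is to avoid oscillation by working with a one-sided Laplace transform and exploiting positivity of the integrand, i.e. to write something like $Q(\xi)$ as a product of two Laplace-type integrals, one handling $\operatorname{Re}\xi \ge 0$ and one handling $\operatorname{Re}\xi \le 0$, each of which is manifestly bounded below by evaluating the positive integrand on a sub-interval of length $\sim 1/(1+\sigma)$ near the point where $e^{w_N(t)} e^{-\sigma t}$ is maximized, combined with the elementary estimate $e^{w_N(t)-\sigma t} \ge e^{w_N(\sigma)-\sigma t}$ on $t\le \sigma$. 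Controlling the behaviour for $\xi$ with nonzero imaginary part (bounded by $h$) then only costs a factor $e^{h|t|}$ inside the integral, which is harmless since the relevant integrands decay superexponentially or since one simply builds that room into the construction from the outset.
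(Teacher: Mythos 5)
There is a genuine gap, and it sits at the very foundation of your construction: you misread what condition \((\epsilon)_0\) controls. It asserts \(\int_0^\infty w_N(t)e^{-\mu t}\,\mathrm{d}t<\infty\), i.e.\ the integrability of \(w_N\) itself against a decaying exponential — not of \(e^{w_N}\). Under \((\epsilon)_0\) the weight \(w_N\) may grow almost exponentially (e.g.\ \(w_N(t)=e^{\sqrt{t}}\) satisfies \((\epsilon)_0\)), so \(e^{w_N(t)}\) may grow doubly exponentially. Consequently your claim that \((\epsilon)_0\) ensures \(e^{w_N(2|t|)}e^{-t^2}\in L^1\) is false, and every integral you propose — \(\int_0^\infty e^{2w_N(2t)}e^{-\zeta t}\,\mathrm{d}t\), \(\int_0^\infty e^{w_N(t)}e^{-\zeta t}\,\mathrm{d}t\), \(\int_{\mathbb{R}}e^{w_N(2|t|)}e^{-t^2}e^{i\zeta t}\,\mathrm{d}t\) — can diverge for every \(\zeta\). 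The Laplace/Fourier–transform route therefore cannot get off the ground in the stated generality. The correct role of \((\epsilon)_0\) here is as a Levinson-type log-integral condition: after a change of variables it governs the convergence of the Poisson integral of the boundary data \(w_N(|\operatorname{Re}\xi|)\) on the strip, and the standard construction (used in the cited \cite[Proposition~3.2]{debrouwere2018non}, to which the paper's proof reduces in one line) builds \(\log|Q|\) as a harmonic majorant of \(w_N(|\operatorname{Re}\xi|)\) via conformal mapping and outer-function techniques, not as a Laplace transform of \(e^{w_N}\).

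Two further, smaller problems would remain even if the integrals converged. First, your "repair the constants" step (3) invokes \((\alpha)\) and Lemma \ref{Subadditivity}, which are not hypotheses of Lemma \ref{Lemma 2}; and even granting \((\alpha)\), an inequality of the form \(c\,w_M(\sigma)\ge w_N(\sigma)+\log A\) with \(c<1\) does not follow from any of the assumed conditions (the system is only pointwise non-decreasing in \(N\)), so a lower bound \(|Q_0(\xi)|\gtrsim e^{c\,w_N(\sigma)}\) with \(c<1\) cannot in general be upgraded to the required \(e^{w_N(\sigma)}\). Second, step (2) — the uniform lower bound on the whole strip — is exactly the crux and is left as an acknowledged "obstacle" with only a heuristic (contour shifting, positivity on a subinterval); as written, the proposal does not contain an argument for it.
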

\begin{proof}
This follows immediately from \cite[Proposition 3.2]{debrouwere2018non}.
  \end{proof}
\subsection{Weighted spaces of smooth and holomorphic functions} 
We are ready to define the weighted spaces of smooth and holomorphic functions that we are interested in.
\begin{definition}
Let \(W =  (w_N)_{N \in \mathbb{N}}\) be a weight function system and let \(F, G \in \mathcal{F}(\mathbb{R})\). We define $\mathcal{K}_{W}(T^{F,G})$ as the space consisting of all 
$f \in C^{\infty}(T^{F,G})$ such that 
$$
p_{N,\alpha,a}(f) =  \sup_{\xi \in \overline{T^{a F, a G}}} e^{w_N(\lvert \mathrm{Re \ }\xi\rvert)} \lvert f^{(\alpha)}(\xi)\rvert < \infty, \qquad \forall N \in \mathbb{N}, \alpha \in \mathbb{N}^2, a \in (0,1).
$$
We endow $\mathcal{K}_{W}(T^{F,G})$ with the locally convex topology generated by the system of seminorms $\{ p_{N,\alpha,a} \mid N \in \mathbb{N}, \alpha \in \mathbb{N}^2, a \in (0,1)\}$.  We set
$$
\mathcal{U}_{W}(T^{F,G}) = \mathcal{K}_{W}(T^{F,G}) \cap \mathcal{H}(T^{F,G})
$$
and endow it 
 with the subspace topology induced by $\mathcal{K}_{W}(T^{F,G})$.\end{definition}

\smallskip

Note that $\mathcal{K}_{W}(T^{F,G})$ is a Fr\'echet space. Moreover, since $\mathcal{U}_{W}(T^{F,G})$ is closed in $\mathcal{K}_{W}(T^{F,G})$, we obtain that $\mathcal{U}_{W}(T^{F,G})$ is a Fr\'echet space as well.

We now give a useful derivative-free characterization of the Fr\'{e}chet space \(\mathcal{U}_W(T^{F,G})\). 

\begin{lemma}\label{lemma-df}
 Let \(W = (w_N)_{N \in \mathbb{N}}\) be a weight function system that satisfies \((\alpha)\) and let \(F, G \in \mathcal{F}(\mathbb{R})\). A function $f \in \mathcal{H}(T^{F,G})$ belongs to 
 $\mathcal{U}_W(T^{F,G})$ if and only if 
$$
p_{N,a}(f) =  \sup_{\xi \in \overline{T^{a F, a G}}} e^{w_N(\lvert \mathrm{Re \ }\xi\rvert)} \lvert f(\xi)\rvert < \infty, \qquad \forall N \in \mathbb{N}, a \in (0,1).
$$
Moreover, the topology of $\mathcal{U}_{W}(T^{F,G})$ is  generated by the system of seminorms $\{ p_{N,a} \mid N \in \mathbb{N}, a \in (0,1)\}$.
\end{lemma}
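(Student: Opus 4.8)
The plan is to derive everything from the Cauchy integral formula, which trades control of $|f|$ on a slightly larger substrip for control of all derivatives $f^{(\alpha)}$ on a given substrip, with condition $(\alpha)$ absorbing the resulting shift in the weight index. One implication and one topological comparison are immediate: if $f \in \mathcal{U}_W(T^{F,G})$, then taking $\alpha = 0$ in the defining seminorms shows $p_{N,a}(f) = p_{N,0,a}(f) < \infty$ for all $N$ and $a$; and since each $p_{N,a}$ is literally one of the seminorms $p_{N,\alpha,a}$ generating the topology of $\mathcal{U}_W(T^{F,G})$, the locally convex topology generated by $\{p_{N,a}\}$ is coarser than the subspace topology.

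For the converse I would fix $N \in \mathbb{N}$, $\alpha \in \mathbb{N}^2$, $a \in (0,1)$, and $f \in \mathcal{H}(T^{F,G})$ with all $p_{N,a}(f)$ finite, and aim to bound $p_{N,\alpha,a}(f)$ by a constant multiple of $p_{M,a'}(f)$ for suitable $M > N$ and $a' \in (a,1)$. First pick $a' \in (a,1)$ and, applying Lemma \ref{Lemma 1} to $a'F, a'G$ with ratio $a/a' \in (0,1)$, an $\varepsilon > 0$ such that $\overline{T^{aF,aG}} + \overline{B}(0,\varepsilon) \subseteq T^{a'F,a'G}$. Since $f$ is holomorphic, $f^{(\alpha)} = i^{\alpha_2} f^{(|\alpha|)}$ with $f^{(k)}$ the $k$-th complex derivative, so the Cauchy estimate on the circle $|\zeta - \xi| = \varepsilon$ gives $|f^{(\alpha)}(\xi)| \le (|\alpha|!/\varepsilon^{|\alpha|}) \sup_{|\zeta - \xi| = \varepsilon} |f(\zeta)|$ for every $\xi \in \overline{T^{aF,aG}}$. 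Next choose $M > N$ and $A > 1$ according to $(\alpha)$; using $|\mathrm{Re}\,\xi| \le |\mathrm{Re}\,\zeta| + \varepsilon$ on that circle together with a short case distinction on whether $|\mathrm{Re}\,\zeta| \ge \varepsilon$, one obtains $w_N(|\mathrm{Re}\,\xi|) \le w_M(|\mathrm{Re}\,\zeta|) + \log A + w_N(2\varepsilon)$. Multiplying the Cauchy estimate by $e^{w_N(|\mathrm{Re}\,\xi|)}$ and inserting this bound yields $p_{N,\alpha,a}(f) \le C\, p_{M,a'}(f)$ with $C = |\alpha|!\, A\, e^{w_N(2\varepsilon)} / \varepsilon^{|\alpha|}$. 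This proves $f \in \mathcal{U}_W(T^{F,G})$; and since $M$, $a'$, $C$ depend only on $(N,\alpha,a)$, it also shows every defining seminorm of $\mathcal{U}_W(T^{F,G})$ is dominated by a multiple of some $p_{M,a'}$, so the topology generated by $\{p_{N,a}\}$ is finer than the subspace topology as well. Combined with the easy comparison above, the two topologies coincide.

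I expect no real obstacle; the only delicate point is the transfer of the weight from the center $\xi$ of the Cauchy circle to its boundary, which is exactly what hypothesis $(\alpha)$ is designed for — mere monotonicity of $w_N$ would not suffice once $w_N$ grows faster than linearly, whereas $(\alpha)$ lets one pass from $w_N$ evaluated at $|\mathrm{Re}\,\xi|$ to $w_M$ evaluated at the slightly smaller $|\mathrm{Re}\,\zeta|$ up to an additive constant. The remaining work is routine bookkeeping with nested substrips, all supplied by Lemma \ref{Lemma 1}.
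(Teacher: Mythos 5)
Your proposal is correct and follows essentially the same route as the paper: the Cauchy integral formula on circles of radius $\varepsilon$ supplied by Lemma \ref{Lemma 1}, with condition $(\alpha)$ used to shift the weight from the center $\xi$ to the circle $|\zeta-\xi|=\varepsilon$ at the cost of increasing $N$ to $M$ and a constant. The only cosmetic difference is that you invoke $(\alpha)$ directly via a case distinction on $|\mathrm{Re}\,\zeta|\ge\varepsilon$, whereas the paper first packages $(\alpha)$ into the weak subadditivity of Lemma \ref{Subadditivity} and applies that.
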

\begin{proof}
Let $\widetilde{\mathcal{U}}_W(T^{F,G})$ be the space consisting of all $f \in  \mathcal{H}(T^{F,G})$ such that $p_{N,a}(f) < \infty$ for all $N \in \mathbb{N}$ and $a \in (0,1)$, endowed with the topology generated by the system of seminorms $\{ p_{N,a} \mid N \in \mathbb{N}, a \in (0,1)\}$.
We need to show that  $\mathcal{U}_W(T^{F,G}) = \widetilde{\mathcal{U}}_W(T^{F,G})$ as locally convex spaces. It is clear that  $\mathcal{U}_W(T^{F,G}) \subseteq \widetilde{\mathcal{U}}_W(T^{F,G})$ continuously. We now show that $\widetilde{\mathcal{U}}_W(T^{F,G})$ is continuously included in $\mathcal{U}_W(T^{F,G})$. Take \(a \in (0,1)\), $\alpha \in \mathbb{N}^2$, and \(N\in \mathbb{N}\) arbitrary. Fix $ b \in (a,1)$ and choose $M > N$ and \(A > 1 \) as in Lemma \ref{Subadditivity}. 
By Lemma \ref{Lemma 1}, there is \(\varepsilon > 0\) such that  $\overline{T^{a F, a G}} + \overline{B}(0, \varepsilon) \subseteq T^{bF,bG}$. Let $f \in \widetilde{\mathcal{U}}_W(T^{F,G})$ be arbitrary. The Cauchy integral formula yields, 
 for all $\xi \in \overline{T^{a F, a G}}$ (we  write $k = |\alpha|$),
$$
\lvert f^{(\alpha)}(\xi) \rvert =  \bigg\lvert \frac{k!}{2\pi i} \oint_{|z-\xi|=\varepsilon}\frac{f(z)}{(z-\xi)^{k+1}}\mathrm{d}z \bigg \rvert \leq  \frac{k!}{2\pi \varepsilon^{k+1}} \oint_{|z-\xi|=\varepsilon}\lvert f(z) \rvert \lvert \mathrm{d} z \rvert.
$$
Hence,
 \begin{align*}
p_{N,\alpha,a}(f) &=  \sup_{\xi \in \overline{T^{a F, a G}}} e^{w_N(\lvert \mathrm{Re \ }\xi\rvert)} \lvert f^{(\alpha)}(\xi)\rvert  
\leq     \frac{k!}{2\pi \varepsilon^{k+1}} \sup_{\xi \in \overline{T^{a F, a G}}}e^{w_N(\lvert \mathrm{Re \ }\xi\rvert)}\oint_{|z-\xi|=\varepsilon}\lvert f(z) \rvert \lvert \mathrm{d} z \rvert \\
&\leq     \frac{Ak!e^{w_M(\varepsilon)}}{2\pi \varepsilon^{k+1}} \sup_{\xi \in \overline{T^{a F, a G}}}\oint_{|z-\xi|=\varepsilon} e^{w_M(\lvert \mathrm{Re \ }z\rvert)}\lvert f(z) \rvert \lvert \mathrm{d} z \rvert \leq     \frac{Ak!e^{w_M(\varepsilon)}}{\varepsilon^{k}} p_{M,b}(f). 
\end{align*}
\end{proof}\subsection{The abstract Mittag-Leffler lemma}\label{ML-subs}
In this subsection, we explain the abstract  Mittag-Leffler lemma for projective spectra of Fr\'echet spaces. This will be needed for
 the proof of our main result. We follow the  book \cite{wengenroth2003derived}.

\begin{definition}
    A \emph{projective spectrum} \(\mathcal{X} = (X_n,\rho_m^n)\) is a sequence  \((X_n)_{n\in \mathbb{N}}\) of vector spaces together with linear maps \(\rho_m^n \colon X_m \rightarrow X_n\), \(n\leq m\), such that \(\rho_n^n = \operatorname{id}_{X_n}\) and \(\rho_k^n = \rho_m^n \circ \rho_k^m\) for \(n\leq m \leq k\). We call \(\rho_m^n\) the \emph{linking maps}. Consider the map
  \[
  \Psi \colon \prod_{n\in\mathbb{N}} X_n \rightarrow \prod_{n\in\mathbb{N}} X_n \colon (x_n)_{n\in \mathbb{N}} \mapsto (x_n - \rho_{n+1}^n(x_{n+1}))_{n\in\mathbb{N}}.
  \]
We define the \emph{projective limit} of \(\mathcal{X}\) as the kernel of $\Psi$, that is,
    \[
    \operatorname{Proj } \mathcal{X} = \left\{(x_n)_{n\in \mathbb{N}}\in \prod_{n\in \mathbb{N}} X_n \mid \rho_m^n(x_m) = x_n, \forall m\geq n\right\} .
    \]
Furthermore, we define the (first)  \emph{derived projective limit} of  \(\mathcal{X}\) as the cokernel of \(\Psi\), namely,
    \[
    \operatorname{Proj}^1 \mathcal{X} = \prod_{n\in \mathbb{N}}X_n / \operatorname{im } \Psi .
    \]
     \end{definition}
  Next, we introduce morphisms between projective spectra.
 \begin{definition}
    Let \(\mathcal{X} = (X_n, \rho_m^n)\) and \(\mathcal{Y} = (Y_n, \sigma_m^n)\) be two projective spectra. A \emph{morphism} \(f = (f_n)_{n\in\mathbb{N}} \colon \mathcal{X} \rightarrow \mathcal{Y}\) consists of linear maps \(f_n \colon X_n \rightarrow Y_n\) such that \(f_n \circ \rho_m^n = \sigma_m^n \circ f_m\) for all \(m \geq n\). The \emph{kernel} \(\operatorname{ker }f\) of the morphism \(f\) is defined as the projective spectrum \((\operatorname{ker }f_n)_{n\in \mathbb{N}}\) with linking maps \({\rho_m^n}_{\vert \operatorname{ker }X_m}\colon \operatorname{ker } X_m \rightarrow \operatorname{ker }X_n\). Moreover, we define for each morphism \(f\colon \mathcal{X}\rightarrow \mathcal{Y}\) the linear map \(\operatorname{Proj }f \colon \operatorname{Proj } \mathcal{X} \rightarrow \operatorname{Proj }\mathcal{Y}\colon (x_n)_{n\in\mathbb{N}} \mapsto (f_n(x_n))_{n\in \mathbb{N}}\).
 \end{definition}
We shall use the following basic property of the derived projective limit.
 \begin{proposition}[{\cite[Proposition 3.1.8]{wengenroth2003derived}}] 
 \label{Mittag-Leffler}
     Let \(\mathcal{X} = (X_n, \rho_m^n)\) and \(\mathcal{Y} = (Y_n, \sigma_m^n)\) be two projective spectra and let \(f = (f_n)_{n\in \mathbb{N}} \colon \mathcal{X}\rightarrow \mathcal{Y}\) be a morphism. The linear map \(\operatorname{Proj } f \colon \rightarrow \operatorname{Proj }\mathcal{X} \rightarrow \operatorname{Proj }\mathcal{Y}\) is surjective if 
     \begin{enumerate}[label=(\roman*)]
         \item For all \(n \in \mathbb{N}\) there is \(m > n\) such that \(\sigma_m^n(Y_m) \subseteq \operatorname{im }f_n\).
         \item \(\operatorname{Proj }^1 \operatorname{ker }f = \{0\}\).
     \end{enumerate}
 \end{proposition}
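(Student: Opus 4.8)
The plan is to run the classical Mittag-Leffler \enquote{successive correction} argument directly, rather than passing through the long exact sequence relating $\operatorname{Proj}$ and $\operatorname{Proj}^1$. Fix $(z_n)_{n \in \mathbb{N}} \in \operatorname{Proj} \mathcal{Y}$; the goal is to construct $(x_n)_{n \in \mathbb{N}} \in \operatorname{Proj} \mathcal{X}$ with $f_n(x_n) = z_n$ for all $n$. As a first step, I would reduce to a lifting problem modulo $\operatorname{ker} f$. For each $n$, hypothesis (i) provides $m > n$ with $\sigma_m^n(Y_m) \subseteq \operatorname{im} f_n$, and since $z_n = \sigma_m^n(z_m)$ by definition of $\operatorname{Proj} \mathcal{Y}$, this forces $z_n \in \operatorname{im} f_n$; hence one may pick $\widetilde{x}_n \in X_n$ with $f_n(\widetilde{x}_n) = z_n$. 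These lifts are in general not compatible with the linking maps, but their defect lies in the kernel spectrum: using $f_n \circ \rho_{n+1}^n = \sigma_{n+1}^n \circ f_{n+1}$ and $\sigma_{n+1}^n(z_{n+1}) = z_n$ one gets $f_n\big(\widetilde{x}_n - \rho_{n+1}^n(\widetilde{x}_{n+1})\big) = 0$, so that $d_n := \widetilde{x}_n - \rho_{n+1}^n(\widetilde{x}_{n+1}) \in \operatorname{ker} f_n$ and $(d_n)_{n} \in \prod_{n} \operatorname{ker} f_n$.

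Next I would invoke hypothesis (ii) to remove this defect. Writing $\Psi_{\operatorname{ker} f}\colon \prod_{n} \operatorname{ker} f_n \to \prod_{n} \operatorname{ker} f_n$ for the map associated to the spectrum $\operatorname{ker} f$, namely $(k_n)_n \mapsto (k_n - \rho_{n+1}^n(k_{n+1}))_n$ — which is well defined because $\rho_{n+1}^n$ maps $\operatorname{ker} f_{n+1}$ into $\operatorname{ker} f_n$ — the definition of $\operatorname{Proj}^1$ as a cokernel shows that (ii) is equivalent to surjectivity of $\Psi_{\operatorname{ker} f}$. Therefore there is $(k_n)_n \in \prod_{n} \operatorname{ker} f_n$ with $k_n - \rho_{n+1}^n(k_{n+1}) = d_n$ for every $n$. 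Setting $x_n := \widetilde{x}_n - k_n$ then gives $f_n(x_n) = z_n - 0 = z_n$ and $x_n - \rho_{n+1}^n(x_{n+1}) = d_n - d_n = 0$; an easy induction using $\rho_k^n = \rho_m^n \circ \rho_k^m$ upgrades this to $\rho_m^n(x_m) = x_n$ for all $m \ge n$, so $(x_n)_n \in \operatorname{Proj} \mathcal{X}$ and $\operatorname{Proj} f\big((x_n)_n\big) = (z_n)_n$, which finishes the argument.

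There is no analytic difficulty here; the argument is pure bookkeeping with projective spectra. The points that deserve the most care are, first, the observation that (i) alone forces each individual $z_n$ to lie in $\operatorname{im} f_n$ (this is the only place (i) enters), and second, the precise identification of $\operatorname{Proj}^1 \operatorname{ker} f = \{0\}$ with surjectivity of the specific map $\Psi_{\operatorname{ker} f}$ built from the restricted linking maps — matching the definitions of the kernel spectrum and of $\Psi$ given above. Everything else is a routine diagram chase.
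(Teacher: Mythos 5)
Your argument is correct and complete: each step (extracting $z_n \in \operatorname{im} f_n$ from (i) via $z_n = \sigma_m^n(z_m)$, showing the defects $d_n$ lie in $\ker f_n$, identifying $\operatorname{Proj}^1 \ker f = \{0\}$ with surjectivity of the map $\Psi$ restricted to $\prod_n \ker f_n$, and correcting the lifts) matches the definitions given in the paper. The paper itself offers no proof, merely citing Wengenroth; your direct diagram chase is exactly the unrolled form of the standard long-exact-sequence argument behind that reference, so nothing further is needed.
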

 In practice, a projective spectrum \(\mathcal{X} = (X_n, \rho_m^n)\)  often consists of locally convex spaces $X_n$ and continuous linking maps $\rho_m^n$. In this case, there exist sufficient (and necessary) linear topological conditions on $\mathcal{X}$ to decide whether \(\operatorname{Proj }^1 \mathcal{X} = \{0\}\); see \cite[Section 3.2]{wengenroth2003derived} for a detailed overview of such conditions. We will need the following sufficient condition for  \(\operatorname{Proj }^1 \mathcal{X} = \{0\}\) for projective spectra \(\mathcal{X}\) consisting of Fr\'echet spaces. This result is sometimes called the \emph{abstract Mittag-Leffler lemma}.
  \begin{proposition}\label{density condition}
Let \(\mathcal{X} = (X_n, \rho_m^n)\) be a projective spectrum consisting of Fréchet spaces and continuous  linking maps. If
\begin{equation}
\label{suff-cond}
\forall n \in \mathbb{N} \, \exists m > n \, \forall k > m: \rho_m^n(X_m) \subseteq \overline{\rho_k^n(X_k)}^{X_n},
\end{equation}
then \(\operatorname{Proj }^1 \mathcal{X} = \{0\}\).
 \end{proposition}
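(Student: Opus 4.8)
The plan is to realize the abstract Mittag-Leffler lemma as a purely topological statement about the surjectivity of the map $\Psi$ defining $\operatorname{Proj}^1$, exploiting completeness of the $X_n$ and the density hypothesis \eqref{suff-cond} to run a diagonal approximation argument. Concretely, let $(y_n)_{n\in\mathbb{N}} \in \prod_n X_n$ be arbitrary; we must produce $(x_n)_{n\in\mathbb{N}} \in \prod_n X_n$ with $x_n - \rho^n_{n+1}(x_{n+1}) = y_n$ for every $n$, i.e.\ show $\Psi$ is surjective, which is exactly $\operatorname{Proj}^1\mathcal{X}=\{0\}$. The idea is to build, for each fixed $n$, a Cauchy sequence in $X_n$ whose limit will serve as $x_n$, in a way that is compatible across all $n$ simultaneously.

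First I would fix, for each $n$, an integer $m(n) > n$ as provided by \eqref{suff-cond}, and WLOG arrange (by passing to a cofinal subsequence of the spectrum, which changes neither $\operatorname{Proj}$ nor $\operatorname{Proj}^1$) that $m(n) = n+1$, so that the hypothesis reads: $\rho^n_{n+1}(X_{n+1}) \subseteq \overline{\rho^n_k(X_k)}^{X_n}$ for all $k > n+1$. Fix also, for each $n$, an increasing sequence of seminorms $(\|\cdot\|_{n,j})_{j}$ generating the topology of the Fréchet space $X_n$. The construction proceeds by a doubly-indexed recursion: starting from the given $y_n$, I would define partial sums $s_n^{(k)} = y_n + \rho^n_{n+1}(y_{n+1}) + \cdots$ along the spectrum, and at each stage use the density hypothesis to replace an exact element by an approximate one lying in the image of a deeper linking map, so that the corrections accumulate in a summable fashion. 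The key quantitative step: given a prescribed sequence of tolerances $\varepsilon_{n,k} > 0$, one inductively chooses elements $z_{n,k} \in X_n$ (obtained by approximating $\rho^n_{n+1}(\text{something in } X_{n+1})$ by something in $\rho^n_k(X_k)$ to within $\varepsilon_{n,k}$ in the $k$-th seminorm of $X_n$) such that the telescoping series $x_n := \sum_{k} (\text{corrections})$ converges in $X_n$ by completeness, and such that the defining relation $x_n - \rho^n_{n+1}(x_{n+1}) = y_n$ holds because the linking maps $\rho^n_{n+1}$ are continuous and hence commute with the (convergent) infinite sums, with all the "error" terms telescoping away to zero.

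The main obstacle — and the point where care is genuinely needed — is the bookkeeping that makes the recursion consistent across all $n$ at once: the element of $X_{n+1}$ that one approximates at level $n$ must itself be one of the approximants being constructed at level $n+1$, so the choices cannot be made independently per $n$ but must be threaded through a single diagonal enumeration of the pairs $(n,k)$. This is the standard but delicate part of the Mittag-Leffler argument: one orders the countably many approximation tasks, and when performing the task at $(n,k)$ one has already fixed enough of the data at level $n+1$ (namely its first $k$ approximants) to know which element of $X_{n+1}$ to push forward and approximate. Choosing the tolerances $\varepsilon_{n,k}$ small enough — e.g.\ $\varepsilon_{n,k} \le 2^{-k}$ relative to the seminorm $\|\cdot\|_{n,k}$, and also small enough that applying the finitely many relevant linking maps $\rho^j_n$ (for $j < n$) keeps the images summable there too — guarantees simultaneous convergence of all the series $x_n$ in their respective Fréchet topologies. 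Once the sequences are built and shown to converge, verifying $\Psi((x_n)) = (y_n)$ is a routine manipulation of convergent series under continuous linear maps, and the conclusion $\operatorname{Proj}^1\mathcal{X} = \{0\}$ follows by definition. I would cite \cite[Theorem~3.2.1]{wengenroth2003derived} or reproduce this diagonal argument in full depending on how self-contained the paper aims to be.
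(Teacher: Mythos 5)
Your proposal is correct and takes essentially the same route as the paper: the paper's entire proof of this proposition is the citation of \cite[Theorem 3.2.1]{wengenroth2003derived}, which is exactly the fallback you offer in your last sentence. Your sketch of the underlying argument (reduction to surjectivity of $\Psi$, normalizing $m(n)=n+1$ by passing to an equivalent spectrum, telescoping corrections chosen via the density hypothesis with summable tolerances, and convergence by completeness plus continuity of the linking maps) is a faithful outline of the standard proof of that cited theorem.
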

 \begin{proof}
     This follows from \cite[Theorem 3.2.1]{wengenroth2003derived}.
 \end{proof}
 
  \section{Surjectivity of the Cauchy-Riemann and Laplace operators on \(\mathcal{K}_{W}(T^{F,G})\)}\label{main section}
  
In this section, we state our main result, which characterizes  the surjectivity of both the Cauchy-Riemann and  Laplace operators on the spaces \(\mathcal{K}_W(T^{F,G})\) in terms of the defining weight function system $W$. Furthermore, we also discuss two particular classes of weight function systems that are generated by a single weight function and for which our main result is directly applicable.

 \begin{theorem}\label{main theorem}Let \(F, G \in \mathcal{F}(\mathbb{R})\) and 
    let \(W\) be a weight function system that satisfies $(\alpha)$ and \((N)\). 
    Then, the following  statements are  equivalent:
    \begin{enumerate}[label=(\roman*)]
        \item The Cauchy-Riemann operator \(\overline{\partial} \colon \mathcal{K}_{W}(T^{F,G}) \rightarrow \mathcal{K}_{W}(T^{F,G})\) is surjective.
        \item The Laplace operator \(\Delta\colon \mathcal{K}_{W}(T^{F,G}) \rightarrow \mathcal{K}_{W}(T^{F,G})\) is surjective.
        \item The weight function system \(W\) satisfies \((\epsilon)_0\).
        \item The space \(\mathcal{U}_{W}(T^{F,G})\) is non-trivial.
    \end{enumerate}
\end{theorem}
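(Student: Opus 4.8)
The plan is to prove the equivalences by running the cycle $(iii)\Rightarrow(i)\Rightarrow(iv)\Rightarrow(iii)$ and adjoining $(i)\Leftrightarrow(ii)$; the bulk of the work is $(iii)\Rightarrow(i)$, whose key input is the weighted Runge theorem of Section~\ref{Runge type result}. The equivalence $(i)\Leftrightarrow(ii)$ is elementary: since the weight $w_N(|\operatorname{Re}\,\cdot\,|)$ is invariant under complex conjugation, $f\mapsto\overline{f}$ is a topological isomorphism of $\mathcal{K}_{W}(T^{F,G})$ conjugating $\overline{\partial}$ into $\partial$, so $\overline{\partial}$ is surjective iff $\partial$ is; and since $\overline{\partial}$ and $\partial$ map $\mathcal{K}_{W}(T^{F,G})$ into itself, the factorisations $\Delta=4\overline{\partial}\partial=4\partial\overline{\partial}$ show that surjectivity of $\overline{\partial}$ (hence of $\partial$) yields surjectivity of $\Delta$, while solving $\Delta f=g$ gives $\overline{\partial}(4\partial f)=g$ with $4\partial f\in\mathcal{K}_{W}(T^{F,G})$.

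For $(iii)\Leftrightarrow(iv)$ I would first record that, for a weight function, $(\epsilon)_0$ is equivalent to $\log w_N(t)/t\to 0$ as $t\to\infty$ (subexponential growth). Assuming $(\epsilon)_0$, choose $h_1>0$ with $T^{F,G}\subseteq T_{h_1}$; the entire function $f(z)=\exp(-2\cosh(\pi z/(2h_1)))$ is nowhere zero, and a Cauchy estimate on a slightly larger strip gives $|f^{(\alpha)}(z)|\le C_{\alpha,a}\exp(-c_a\,e^{\pi|\operatorname{Re}z|/(2h_1)})$ on $\overline{T^{aF,aG}}$ with $c_a>0$; since $w_N(t)=o(e^{\pi t/(2h_1)})$, this forces $f\in\mathcal{U}_{W}(T^{F,G})\setminus\{0\}$, proving $(iii)\Rightarrow(iv)$. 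For $(iv)\Rightarrow(iii)$, take $0\neq f\in\mathcal{U}_{W}(T^{F,G})$ and a horizontal sub-strip $T_{h_0}\subseteq T^{F,G}$; on $T_{h_0'}$ with $h_0'<h_0$, $f$ is holomorphic, bounded and satisfies $|f(x\pm ih_0')|\le C_N e^{-w_N(|x|)}$. Transplanting $T_{h_0'}$ to the disk and applying Jensen's inequality (the harmonic measure from an interior point where $f\neq0$ has density $\sim e^{-(\pi/(2h_0'))|x|}$ on the boundary lines), together with $f\not\equiv 0$, yields $\int_0^\infty w_N(t)e^{-c_0t}\,\mathrm{d}t<\infty$ for $c_0=\pi/(2h_0')$ and every $N$. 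Finally $(\alpha)$ upgrades this to $(\epsilon)_0$: writing $\ell_N=\limsup_{t\to\infty}\log w_N(t)/t$, the estimate $w_N(2t)\le w_M(t)+O(1)$ gives $\ell_N\le\tfrac12\ell_M$, so since all $\ell_N$ are finite (being $\le c_0$) one gets $\sup_N\ell_N\le 2^{-k}c_0$ for every $k$, hence $\ell_N=0$ for all $N$.

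For $(i)\Rightarrow(iv)$ I would transplant growth to one end of the strip. Fix $\chi\in C^{\infty}(\mathbb{R})$ with $\chi\equiv 1$ on $(-\infty,0]$ and $\chi\equiv 0$ on $[1,\infty)$, and set $\rho(z)=\chi(\operatorname{Re}z)\,e^{z^2}$. Then $g:=\overline{\partial}\rho=\tfrac12\chi'(\operatorname{Re}z)\,e^{z^2}$ is a $C^{\infty}$ function supported in the bounded set $\{0\le\operatorname{Re}z\le 1\}\cap T^{F,G}$ and bounded there (as $|e^{z^2}|\le e$ on vertical slivers), so $g\in\mathcal{K}_{W}(T^{F,G})$. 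By $(i)$ pick $u\in\mathcal{K}_{W}(T^{F,G})$ with $\overline{\partial}u=g$; then $v:=\rho-u\in\mathcal{H}(T^{F,G})$. Along the real axis $|v(x)|\ge e^{x^2}-|u(x)|\to\infty$ as $x\to-\infty$, so $v\not\equiv 0$; and for $\operatorname{Re}z>1$ we have $\rho\equiv 0$, hence $v=-u$ there, so $v$ restricts to a non-zero holomorphic function on the half-strip $\{\operatorname{Re}z>1\}\cap T^{F,G}$ carrying the rapid weighted bounds inherited from $u$. Running the Phragmén--Lindel\"of/Jensen argument of $(iv)\Rightarrow(iii)$ on a sub-half-strip $\{|\operatorname{Im}z|<h_0'',\ \operatorname{Re}z>1\}$ then gives $\int_0^\infty w_N(t)e^{-c_0t}\,\mathrm{d}t<\infty$, whence $(\epsilon)_0$ by $(\alpha)$, and then $(iv)$ by the explicit construction of the previous paragraph.

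The heart of the proof is $(iii)\Rightarrow(i)$, via the projective-limit method and Proposition~\ref{Mittag-Leffler}. Exhaust $T^{F,G}$ by closed sub-strips $S_n=\overline{T^{a_nF,a_nG}}$ with $a_n\uparrow 1$, and realise $\mathcal{K}_{W}(T^{F,G})$ as the projective limit of a spectrum $(X_n,\rho_m^n)$ of Fr\'echet spaces of smooth functions with the weighted bounds on $S_n$, the $\rho_m^n$ being restrictions; then $\overline{\partial}$ is a morphism of this spectrum. For hypothesis (i) of Proposition~\ref{Mittag-Leffler}, namely solvability of $\overline{\partial}u=g$ on each piece with weighted estimates, I would use the modified Cauchy transform $u(z)=-\tfrac1\pi\iint\frac{Q(z)}{Q(\zeta)}\,\frac{g(\zeta)}{\zeta-z}\,\mathrm{d}A(\zeta)$, where $Q\in\mathcal{H}(T_h)$ with $e^{w_N(|\operatorname{Re}\,\cdot\,|)}\le|Q|$ (hence $Q$ zero-free) is furnished by Lemma~\ref{Lemma 2} together with the matching upper bound coming from its construction; the estimates on $u$ and its derivatives follow by splitting the integral near and far from $z$ and invoking $(\alpha)$ (to compare $w_N$ at $t$ and $t/2$) and $(N)$ (to absorb the polynomial factor from the Cauchy kernel). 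For hypothesis (ii), the vanishing of $\operatorname{Proj}^1$ of the kernel spectrum $(\mathcal{U}_{W}(S_n),\ \text{restrictions})$, I would appeal to Proposition~\ref{density condition}, reducing it to the assertion that for each $n$ there is $m>n$ such that, for every $k>m$, the weighted-holomorphic functions on $S_k$ are dense, in the topology of $\mathcal{U}_{W}(S_n)$, in those on $S_m$ restricted to $S_n$. This is precisely the weighted Runge approximation theorem of Section~\ref{Runge type result}, and proving it---upgrading the classical rational approximation on the nested simply connected strips to one that preserves the weighted rapid-decay bounds uniformly---is the principal obstacle of the whole argument. Once (i) and (ii) hold, $\operatorname{Proj}\overline{\partial}=\overline{\partial}\colon\mathcal{K}_{W}(T^{F,G})\to\mathcal{K}_{W}(T^{F,G})$ is surjective, closing the cycle.
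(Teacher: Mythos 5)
Your architecture is sound and, on the decisive implication $(iii)\Rightarrow(i)$, essentially identical to the paper's: the same projective spectrum over nested substrips, Proposition~\ref{Mittag-Leffler} with the lifting property supplied by a $Q$-modified Cauchy transform and the $\operatorname{Proj}^1$-vanishing reduced via Proposition~\ref{density condition} to the weighted Runge theorem (Theorem~\ref{approximation theorem}). Your $(iv)\Rightarrow(iii)$ is also the paper's argument (Jensen on a horizontal substrip; the paper invokes \cite[Lemma 3.4]{debrouwere2018non} and uses the monotonicity of the system where you use $(\alpha)$ to halve $\mu$ iteratively --- both work). Where you genuinely diverge: you prove $(ii)\Rightarrow(i)$ directly by writing $g=\overline{\partial}(4\partial f)$ when $\Delta f=g$, which is cleaner than the paper's detour $(ii)\Rightarrow(iv)\Rightarrow(iii)\Rightarrow(i)$ via the gradient of a harmonic function; you prove $(iii)\Rightarrow(iv)$ by the explicit double-exponentially decaying function $\exp(-2\cosh(\pi z/(2h_1)))$ (correct, and self-contained, whereas the paper only obtains $(iv)$ through $(i)$); and for $(i)\Rightarrow(iv)$ you replace the paper's soft Grothendieck-style argument (Proposition~\ref{non-triviality kernel}: open mapping theorem, localization, transpose) by a $\overline{\partial}$-correction of $\chi(\operatorname{Re}z)e^{z^2}$ producing a nontrivial holomorphic function with weighted bounds on a half-strip, from which you re-extract $(\epsilon)_0$.

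Two steps need more work than you give them. First, in the lifting property you take the kernel $Q(z)/\bigl(Q(\zeta)(z-\zeta)\bigr)$ and appeal to a ``matching upper bound'' on $Q$; Lemma~\ref{Lemma 2} only provides the lower bound $|Q|\ge e^{w_N(|\operatorname{Re}\,\cdot\,|)}$, and nothing in the paper gives an upper bound. The paper's choice $P=Q(0)/Q$ puts $Q$ in the denominator only, and the convolution form $(g\varphi)*(P(z)/(\pi z))$ lets all derivatives fall on $g\varphi$; you should either adopt that kernel or actually establish two-sided bounds for $Q$. Second, your $(i)\Rightarrow(iv)$ rests on a Jensen/harmonic-measure estimate for a \emph{half-strip} with an explicit lower bound $\gtrsim e^{-c x}$ for the density of harmonic measure on the horizontal edges; the lemma you are transposing is stated for full strips, so this half-strip version (true, but requiring a conformal map and Poisson-kernel computation) must be proved, and it is comparable in effort to the full-strip lemma itself. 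Neither point is fatal, but both are real obligations your sketch leaves open, and the paper's Proposition~\ref{non-triviality kernel} shows how to avoid the second one entirely.
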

The proof of Theorem \ref{main theorem} will be
given in Section \ref{proof main theorem}. 
The most involved part will be the proof of the implication   \((iii)\implies (i)\), for which we will make use of  the abstract Mittag-Leffler lemma (Proposition \ref{density condition}). In fact, to check the statement \eqref{suff-cond}, we will 
employ a Runge type approximation theorem related to the spaces  \(\mathcal{U}_{W}(T^{F,G})\), shown in  Section \ref{Runge type result}.

Let us now introduce two classes of weight function systems solely generated by a single weight function \(w\) and rephrase the conditions appearing in Theorem \ref{main theorem} in terms of  \(w\). To this end, we consider the following conditions on  $w$:
\begin{enumerate}
    \item [\((\alpha)\)] \(\exists C > 0, A > 1 \, \forall t \geq 0: w(2t) \leq Cw(t) + \log A\) .\\
    \item [\((\epsilon)_0\)] \(\forall \mu > 0: \displaystyle \int_0^{\infty} w(t) e^{-\mu t}\mathrm{d}t\) .
    \item [\((N)_i\)] \(\exists C > 1: \displaystyle \int_0^{\infty} e^{w(t) - w(Ct)}\mathrm{d}t < \infty .\)
    \item [\((N)_o\)] \(\exists C > 0: \displaystyle\int_0^{\infty} e^{-Cw(t)}\mathrm{d}t < \infty\).\\
\end{enumerate}

Given a weight function $w$, we define the weight function system \(W_w = (w(N \cdot))_{N\in\mathbb{N}}\). 
The following lemma provides, among other useful properties, a sufficient condition on \(w\) for \(W_w\) to satisfy \((N)\).

\begin{lemma}\label{inside}
    Let \(w\) be a weight function.    Then, 
    \begin{enumerate}[label=(\roman*)]
        \item \(W_w\) always satisfies \((\alpha)\).
         \item \(W_w\) satisfies \((\epsilon)_0\) if and only if \(w\) satisfies \((\epsilon)_0\).
	\item \(W_w\) satisfies \((N)\) if and only if \(w\) satisfies \((N)_i\).
        \item If \(w\) satisfies
       \begin{equation}
       \label{deltacond}
        \exists C>0,A > 1\, \forall t \geq 0: 2w(t) \leq w(Ct) + \log A,
        \end{equation} then \(w\) satisfies \((N)_i\).
    \end{enumerate}
 \end{lemma}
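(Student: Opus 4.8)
The plan is to handle the four items in turn; (i) is immediate, (ii) and (iii) reduce to a change of variables in the defining integrals, and only (iv) carries real content. For (i), I would first record that $W_w$ is genuinely a weight function system: each $w(N\cdot)$ is non-decreasing and unbounded, and $w(Nt) \le w((N+1)t)$ for $t \ge 0$ because $w$ is non-decreasing. To obtain $(\alpha)$ for $W_w$, given $N$ one simply takes $M = 2N$ and any $A > 1$; then $w_N(2t) = w(2Nt) = w_M(t) \le w_M(t) + \log A$ for every $t \ge 0$. Note that no hypothesis on $w$ is used here.

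For (ii) and (iii), the workhorse is the substitution $s = Nt$, under which $\int_0^\infty g(Nt)\,\mathrm{d}t = \tfrac1N \int_0^\infty g(s)\,\mathrm{d}s$ for any measurable $g \ge 0$. For (ii), applying this with $g(s) = w(s)e^{-(\mu/N)s}$ shows that $\int_0^\infty w(Nt) e^{-\mu t}\,\mathrm{d}t < \infty$ for all $N$ and all $\mu > 0$ as soon as $\int_0^\infty w(s) e^{-\nu s}\,\mathrm{d}s < \infty$ for all $\nu > 0$; the converse is just the case $N = 1$. For (iii), if $w$ satisfies $(N)_i$ with constant $C > 1$, then given $N$ I choose an integer $M \ge NC$ (so that $M > N$, since $NC > N$), and monotonicity of $w$ gives $w((M/N)s) \ge w(Cs)$, whence
$$
\int_0^\infty e^{w(Nt) - w(Mt)}\,\mathrm{d}t = \tfrac1N \int_0^\infty e^{w(s) - w((M/N)s)}\,\mathrm{d}s \le \tfrac1N \int_0^\infty e^{w(s) - w(Cs)}\,\mathrm{d}s < \infty;
$$
conversely, the case $N = 1$ of $(N)$ for $W_w$ is exactly $(N)_i$ for $w$ (with $C = M$).

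The substance lies in (iv). Assuming \eqref{deltacond} holds with constants $C > 0$ and $A > 1$, I would first argue $C > 1$: otherwise $w(Ct) \le w(t)$, so \eqref{deltacond} forces $w \le \log A$, contradicting unboundedness. Rearranging \eqref{deltacond} to $w(t) - w(Ct) \le \log A - w(t)$ yields
$$
e^{w(t) - w(Ct)} \le A\, e^{-w(t)}, \qquad t \ge 0,
$$
so it suffices to prove $\int_0^\infty e^{-w(t)}\,\mathrm{d}t < \infty$. For this I would iterate \eqref{deltacond}: an easy induction gives $w(C^n t) \ge 2^n\bigl(w(t) - \log A\bigr) + \log A$ for all $t \ge 0$ and $n \in \mathbb{N}$. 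Fixing $t_0$ with $w(t_0) \ge \log A + 1$ (possible since $w$ is unbounded) then gives $w(C^n t_0) \ge 2^n$, and splitting $[t_0, \infty)$ into the intervals $[C^n t_0, C^{n+1} t_0]$ and using monotonicity of $w$,
$$
\int_{t_0}^\infty e^{-w(t)}\,\mathrm{d}t \le (C-1)\, t_0 \sum_{n=0}^\infty C^n e^{-2^n} < \infty,
$$
the series converging because $e^{-2^n}$ decays faster than any geometric rate. Since $\int_0^{t_0} e^{-w(t)}\,\mathrm{d}t \le t_0$, this yields $\int_0^\infty e^{-w(t)}\,\mathrm{d}t < \infty$, and hence $\int_0^\infty e^{w(t) - w(Ct)}\,\mathrm{d}t < \infty$ with $C > 1$, i.e.\ $w$ satisfies $(N)_i$. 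The only genuinely delicate point in the whole lemma is this last estimate: one must iterate the doubling-type inequality \eqref{deltacond} to extract the super-geometric lower bound $w(C^n t_0) \gtrsim 2^n$, which is precisely what makes the annular sum converge; everything else is a change of variables combined with the monotonicity of $w$.
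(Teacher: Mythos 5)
Your proposal is correct and follows essentially the same route as the paper: items (i)--(iii) are dispatched by the same elementary observations (the paper simply calls them obvious), and for (iv) both arguments iterate \eqref{deltacond} to get the super-geometric lower bound $w(C^n t_0)\gtrsim 2^n$ and then reduce $(N)_i$ to the integrability of $e^{-w}$ via $e^{w(t)-w(Ct)}\le Ae^{-w(t)}$. The only cosmetic difference is the last step, where the paper converts the iteration into the pointwise bound $w(t)\ge 2\log(1+t)$ and integrates $(1+t)^{-2}$, while you sum directly over the annuli $[C^n t_0, C^{n+1}t_0]$; your explicit check that $C>1$ is a worthwhile detail the paper handles implicitly.
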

\begin{proof}
Statements \((i)\)--\((iii)\) are obvious,  so we only show \((iv)\).
 Condition \eqref{deltacond} implies that there is \(A > 1\) such that \(2 w(t/A) \leq w(t)+\log A\) for all \(t \geq 0\). 
Iterating this condition yields  \(2^n w(t/A^n)\leq w(t) +(2^n-1) \log A \) for all \(t\geq 0\) and  $n \in \mathbb{N}$. Choose \(t_0 > 0\) such that \(w(t_0) > \log A\). Take \(t\geq t_0\) arbitrary and pick \(n\in \mathbb{N}\) such that \(t_0 A^n \leq t < t_0 A^{n+1}\). As $w$ is non-decreasing, we find that
        \begin{align*}
        w(t) &\geq w(t_0 A^n) \geq 2^n  w(t_0) - (2^n-1) \log A \\ \nonumber
                  &= 2^n (w(t_0)-\log A)+ \log A 
                  \geq 2^{\frac{\log \frac{t}{t_0}}{\log A}-1} (w(t_0)-\log A)+\log A \nonumber .
    \end{align*}
In particular, there is \(\tau > 0\) such that \(w(t) \geq 2\log (1+t)\) for \(t\geq \tau\). Hence, we get  \(w(t) + 2\log(1+t) \leq 2w(t) \leq w(At) + \log A\) for all \(t \geq \tau\).
This implies that \(w\) satisfies \((N)_i\) with \(C = A\).
\end{proof}
Hence, we can reformulate Theorem \ref{main theorem} for weight function systems of type \(W_w\) as follows.
\begin{corollary}\label{main theorem for N inside} Let \(F, G \in \mathcal{F}(\mathbb{R})\) and let the weight function $w$ satisfy \((N)_i\).
   Then, the following  statements are  equivalent:
     \begin{enumerate}[label=(\roman*)]
        \item The Cauchy-Riemann operator \(\overline{\partial} \colon \mathcal{K}_{W_w}(T^{F,G}) \rightarrow \mathcal{K}_{W_w}(T^{F,G})\) is surjective.
        \item The Laplace operator \(\Delta\colon \mathcal{K}_{W_w}(T^{F,G}) \rightarrow \mathcal{K}_{W_w}(T^{F,G})\) is surjective.
        \item The weight function \(w\) satisfies \((\epsilon)_0\).
        \item The space \(\mathcal{U}_{W_w}(T^{F,G})\) is non-trivial.
    \end{enumerate}
\end{corollary}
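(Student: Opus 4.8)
The plan is to deduce Corollary \ref{main theorem for N inside} directly from Theorem \ref{main theorem} by verifying that the weight function system $W_w = (w(N\cdot))_{N\in\mathbb{N}}$ satisfies the hypotheses of Theorem \ref{main theorem} and that the four conditions listed there translate into the four conditions stated here. First I would invoke Lemma \ref{inside}: part $(i)$ gives that $W_w$ always satisfies $(\alpha)$, and part $(iii)$ together with the standing assumption that $w$ satisfies $(N)_i$ gives that $W_w$ satisfies $(N)$. Hence the hypotheses of Theorem \ref{main theorem} hold for $W = W_w$ and the generalized strip $T^{F,G}$, so statements $(i)$--$(iv)$ of Theorem \ref{main theorem} are all equivalent.

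Next I would match up the statements one by one. Statements $(i)$ and $(ii)$ of the corollary are literally statements $(i)$ and $(ii)$ of Theorem \ref{main theorem} with $W = W_w$, so nothing needs to be checked there. Statement $(iv)$ of the corollary, non-triviality of $\mathcal{U}_{W_w}(T^{F,G})$, is likewise literally statement $(iv)$ of Theorem \ref{main theorem}. The only point requiring a short argument is the equivalence of statement $(iii)$ of the corollary ($w$ satisfies $(\epsilon)_0$) with statement $(iii)$ of Theorem \ref{main theorem} ($W_w$ satisfies $(\epsilon)_0$), and this is exactly the content of Lemma \ref{inside}$(ii)$. Stringing these identifications together with the equivalences from Theorem \ref{main theorem} yields the claimed chain of equivalences.

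There is essentially no obstacle here: the corollary is a bookkeeping consequence of Theorem \ref{main theorem} and Lemma \ref{inside}, and the only mildly non-trivial input — that $W_w$ satisfies $(N)$ whenever $w$ satisfies $(N)_i$, and that $(\epsilon)_0$ for $W_w$ and for $w$ coincide — has already been established (or noted as obvious) in Lemma \ref{inside}. If I wanted to make the write-up fully self-contained I might briefly recall why Lemma \ref{inside}$(ii)$ and $(iii)$ hold: for $(\epsilon)_0$, the condition for $W_w$ is ``$\int_0^\infty w(Nt)e^{-\mu t}\mathrm{d}t<\infty$ for all $N$ and all $\mu>0$'', which after the substitution $s = Nt$ becomes ``$\int_0^\infty w(s)e^{-(\mu/N)s}\mathrm{d}s<\infty$'', and since $\mu>0$ and $N\in\mathbb{N}$ are arbitrary this is visibly the same as $(\epsilon)_0$ for $w$; the argument for $(N)$ versus $(N)_i$ is the analogous rescaling, comparing $w(Nt)$ with $w(Mt)$ for $M = CN$. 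But since Lemma \ref{inside} is already available, the proof of the corollary can be kept to a couple of lines that simply cite Theorem \ref{main theorem} and Lemma \ref{inside}.
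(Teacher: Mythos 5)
Your proposal is correct and is exactly the argument the paper intends: the corollary follows from Theorem \ref{main theorem} once Lemma \ref{inside}$(i)$ and $(iii)$ show that $W_w$ satisfies $(\alpha)$ and $(N)$ under the hypothesis $(N)_i$, and Lemma \ref{inside}$(ii)$ identifies the two versions of $(\epsilon)_0$. Nothing is missing.
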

Note that,  when $h<\infty$, Theorem \ref{theorem 1} from the Introduction is a particular instance of Corollary \ref{main theorem for N inside}.
\begin{example} 
Suppose that either \(a > 0\) and \(b \in \mathbb{R}\) or \(a=0\) and \(b\geq0\). 
Consider a weight function \(w\) defined as $w(t)= e^{t^a (\log(e+t))^b}$ for \(t\) sufficiently large. Then, by Lemma \ref{inside}$(iv)$, $w$ always satisfies $(N)_i$, while it satisfies \((\epsilon)_0\) if and only if \(0\leq a < 1\) or \(a=1\) and \(b<0\). Let \(F, G \in \mathcal{F}(\mathbb{R})\). By Corollary \ref{main theorem for N inside}, we obtain that the \(\overline{\partial}\)-operator and/or the \(\Delta\)-operator are surjective on  $\mathcal{K}_{W_w}(T^{F,G})$ if and only if \(0\leq a < 1\) or \(a=1\) and \(b<0\).
\end{example}

Next,  we consider weight functions systems of the form \(\tilde{W}_w = (Nw)_{N\in\mathbb{N}}\), where $w$ is a weight function.
\begin{lemma}\label{lemma 8}
    Let \(w\) be a weight function. Then, 
    \begin{enumerate}[label=(\roman*)]
        \item \(\tilde{W}_w\) satisfies \((\alpha)\) if and only if \(w\) satisfies \((\alpha)\).
        \item \(\tilde{W}_w\) satisfies \((\epsilon)_0\) if and only if \(w\) satisfies \((\epsilon)_0\).
        \item  \(\tilde{W}_w\) satisfies \((N)\) if and only if \(w\) satisfies \((N)_o\). 
                \item If \(w\) satisfies \((\alpha)\), then there is $a >0$ such that $\omega(t) = O(t^a)$ as $t \to \infty$.  In particular, \(w\) satisfies \((\epsilon)_0\).
    \end{enumerate}
\end{lemma}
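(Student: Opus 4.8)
The plan is to establish the four equivalences one at a time, taking advantage of the fact that $\tilde{W}_w = (w_N)_{N\in\mathbb{N}}$ is completely explicit, $w_N = Nw$; in each item the assertion collapses to an elementary inequality or a direct comparison of integrals. Throughout, the forward implications in $(i)$ and $(iii)$ will just be the instance $N = 1$ of the condition on $\tilde{W}_w$.

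For $(i)$: if $\tilde{W}_w$ satisfies $(\alpha)$, then taking $N = 1$ produces $M > 1$ and $A > 1$ with $w(2t)\le M w(t)+\log A$ for all $t\ge 0$, which is $(\alpha)$ for $w$. Conversely, if $w$ satisfies $(\alpha)$ with constants $C$ and $A$ — and we may assume $C\ge 1$, replacing $C$ by $\max\{C,1\}$ — then for $N\in\mathbb{N}$ I would choose an integer $M>\max\{N,NC\}$ and write $w_N(2t)=Nw(2t)\le NC\,w(t)+N\log A\le M w(t)+\log(A^N)$ for all $t\ge 0$, giving $(\alpha)$ for $\tilde{W}_w$.

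Items $(ii)$ and $(iii)$ are direct. For $(ii)$, the identity $\int_0^{\infty} w_N(t) e^{-\mu t}\,\mathrm{d}t = N\int_0^{\infty} w(t) e^{-\mu t}\,\mathrm{d}t$ settles both directions at once. For $(iii)$, note that $\int_0^{\infty} e^{w_N(t)-w_M(t)}\,\mathrm{d}t = \int_0^{\infty} e^{-(M-N)w(t)}\,\mathrm{d}t$ whenever $M>N$; the case $N=1$ of $(N)$ gives $(N)_o$ with $C=M-1$, and conversely, given $(N)_o$ with constant $C$, for each $N$ one picks an integer $M>N$ with $M-N\ge C$, so that $e^{-(M-N)w(t)}\le e^{-Cw(t)}$ pointwise (as $w\ge 0$) and the integral converges.

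For $(iv)$, I would iterate $w(2t)\le Cw(t)+\log A$ to obtain, for all $t\ge 0$ and $n\in\mathbb{N}$, $w(2^n t)\le C^n w(t)+\frac{C^n-1}{C-1}\log A$ when $C>1$ (the degenerate case $C=1$ gives $w(2^n t)\le w(t)+n\log A$, already forcing at most logarithmic growth). Evaluating at $t=1$ and using that $w$ is non-decreasing, for $t\ge 1$ and $n:=\lfloor\log_2 t\rfloor$ one gets $w(t)\le w(2^{n+1})\le C^{n+1}\bigl(w(1)+\frac{\log A}{C-1}\bigr)\le K t^{\log_2 C}$, where $K:=C\bigl(w(1)+\frac{\log A}{C-1}\bigr)$ and we used $C^n=(2^n)^{\log_2 C}\le t^{\log_2 C}$. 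Hence $w(t)=O(t^a)$ with $a=\log_2 C>0$, and $(\epsilon)_0$ follows by splitting $\int_0^{\infty} w(t)e^{-\mu t}\,\mathrm{d}t$ into $\int_0^1 w(1)e^{-\mu t}\,\mathrm{d}t<\infty$ and $\int_1^{\infty} Kt^a e^{-\mu t}\,\mathrm{d}t<\infty$. I expect no genuine obstacle here: the only places asking for a little care are noticing $C\ge1$ in $(i)$ and handling the geometric sum (and the case $C=1$) in the iteration for $(iv)$.
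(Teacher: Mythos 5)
Your proposal is correct and follows essentially the same route as the paper: items $(i)$--$(iii)$ are the elementary manipulations the paper dismisses as obvious (which you spell out correctly), and for $(iv)$ you iterate the doubling inequality $w(2t)\le Cw(t)+\log A$ exactly as the paper does, arriving at the same bound $w(t)=O(t^{\log_2 C})$. Your explicit handling of the normalization $C\ge 1$ and of the degenerate case $C=1$ is a harmless refinement of what the paper implicitly assumes by taking $C>1$.
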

\begin{proof}
    Statements \((i)\)--\((iii)\) are obvious. 
   Property $(iv)$ is  a consequence of general results from the theory of regular variation \cite{BGT}, but since the proof is short, we give it here for the sake of completeness. Since \(w\) satisfies \((\alpha)\), there are $C >1$ and \(A > 1\) such that \(w(2t) \leq Cw(t) + \log A\) for all \(t\geq 0\).         
        Take an arbitrary \(t \geq 1\)  and choose \(n \in \mathbb{N}\) such that \(2^n \leq t < 2^{n+1}\). By iteration of the above inequality, we get that
    \[
    w(t) \leq w(2^{n+1}) \leq C^{n+1}w(1) + \frac{C^{n+1}-1}{C-1} \log A \leq  C^{\frac{\log t}{\log 2}+1}w(1) + \frac{C^{\frac{\log t}{\log 2}+1}-1}{C-1}\log A,
    \]
which shows the result.
\end{proof}
Hence, we can reformulate Theorem \ref{main theorem} for weight function systems of type \(\tilde{W}_w\) as follows.
\begin{corollary}\label{main theorem for N outside} Let \(F, G \in \mathcal{F}(\mathbb{R})\) and
let \(w\) be a weight function that satisfies \((\alpha)\) and \((N)_o\). 
Then, the following  statements are  valid:
\begin{enumerate}[label=(\roman*)]
        \item The Cauchy-Riemann operator \(\overline{\partial} \colon \mathcal{K}_{\tilde{W}_w}(T^{F,G}) \rightarrow \mathcal{K}_{\tilde{W}_w}(T^{F,G})\) is surjective.
        \item The Laplace operator \(\Delta\colon \mathcal{K}_{\tilde{W}_w}(T^{F,G}) \rightarrow \mathcal{K}_{\tilde{W}_w}(T^{F,G})\) is surjective.
        \item The space \(\mathcal{U}_{\tilde{W}_w}(T^{F,G})\) is non-trivial.
        \end{enumerate}
\end{corollary}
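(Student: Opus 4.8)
The plan is to deduce this corollary directly from Theorem \ref{main theorem} applied to the weight function system $\tilde{W}_w = (Nw)_{N\in\mathbb{N}}$, using the structural properties recorded in Lemma \ref{lemma 8}. First I would check that $\tilde{W}_w$ meets the hypotheses of Theorem \ref{main theorem}: since $w$ satisfies $(\alpha)$, Lemma \ref{lemma 8}$(i)$ gives that $\tilde{W}_w$ satisfies $(\alpha)$; and since $w$ satisfies $(N)_o$, Lemma \ref{lemma 8}$(iii)$ gives that $\tilde{W}_w$ satisfies $(N)$. Hence Theorem \ref{main theorem} is applicable and yields the equivalence of its four statements $(i)$--$(iv)$, where its statement $(iii)$ reads: the weight function system $\tilde{W}_w$ satisfies $(\epsilon)_0$.

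Next I would observe that this condition is automatically fulfilled in the present setting. Indeed, by Lemma \ref{lemma 8}$(iv)$ the assumption that $w$ satisfies $(\alpha)$ forces $w(t) = O(t^a)$ as $t\to\infty$ for some $a>0$, so $w$ satisfies $(\epsilon)_0$; then Lemma \ref{lemma 8}$(ii)$ transfers this to $\tilde{W}_w$, whence $\tilde{W}_w$ satisfies $(\epsilon)_0$. Consequently, all four statements of Theorem \ref{main theorem} hold. In particular, statements $(i)$, $(ii)$, and $(iv)$ of Theorem \ref{main theorem} are valid, and these are precisely statements $(i)$, $(ii)$, and $(iii)$ of the corollary.

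Since every step is a direct invocation of an already established result, there is no genuine obstacle to overcome here; the only point requiring (minimal) care is the bookkeeping matching the numbered conditions of Theorem \ref{main theorem} to those of the corollary, together with the observation — contained in Lemma \ref{lemma 8}$(iv)$ — that $(\alpha)$ alone already entails polynomial growth and therefore $(\epsilon)_0$. It is exactly this automatic implication that makes the surjectivity conclusions unconditional in this case, in contrast with Corollary \ref{main theorem for N inside}, where $(\epsilon)_0$ has to be imposed as a separate hypothesis.
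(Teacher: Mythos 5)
Your proposal is correct and matches the paper's (implicit) argument exactly: apply Theorem \ref{main theorem} to $\tilde{W}_w$, using Lemma \ref{lemma 8}$(i)$ and $(iii)$ to verify the hypotheses $(\alpha)$ and $(N)$, and Lemma \ref{lemma 8}$(iv)$ together with $(ii)$ to see that $(\epsilon)_0$ holds automatically, so that all the equivalent statements are valid. Nothing is missing.
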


\begin{example} 
Let \(a > 0\) and \(b \in \mathbb{R}\) or $a = 0$ and $b \geq 1$. Consider a weight function \(w\) defined as $w(t)= t^a (\log(e+t))^b$ for \(t\) sufficiently large. Then, $w$ always satisfies \((\alpha)\) and $(N)_o$. Let \(F, G \in \mathcal{F}(\mathbb{R})\). By Corollary \ref{main theorem for N outside}, we obtain that the \(\overline{\partial}\)-operator and the \(\Delta\)-operator are surjective on  $\mathcal{K}_{\tilde{W}_w}(T^{F,G})$. 
\end{example}

We end this section with an important remark. 

\begin{remark}
\label{rm infinity case}
Let \(W = (w_N)_{N\in\mathbb{N}}\) be a weight function system.
     Define \(\mathcal{K}_{W}(\mathbb{C}) = \operatorname{Proj }_{N\in \mathbb{N}} \mathcal{K}_{w_N}(T_N)\) with the natural restriction maps as linking maps. Assume that \(W\) satisfies \((\alpha)\) and $(N)$. Similarly to Theorem \ref{main theorem}, we have that the following statements are equivalent:
     \begin{enumerate}[label=(\roman*)]
          \item The Cauchy-Riemann operator \(\overline{\partial} \colon \mathcal{K}_{W}(\mathbb{C}) \rightarrow \mathcal{K}_{W}(\mathbb{C})\) is surjective.
        \item The Laplace operator \(\Delta\colon \mathcal{K}_{W}(\mathbb{C}) \rightarrow \mathcal{K}_{W}(\mathbb{C})\) is surjective.
        \item The weight function system \(W\) satisfies \((\epsilon)_0\).
        \item The space \(\mathcal{U}_{W}(\mathbb{C})\) is non-trivial. 
     \end{enumerate}
This can be shown in the same way as Theorem \ref{main theorem} (one also needs an obvious modification of Proposition \ref{non-triviality kernel} below); the details are left to the reader. Furthermore,  Corollaries \ref{main theorem for N inside} and \ref{main theorem for N outside} are still valid if we replace \(T^{F,G}\) by \(\mathbb{C}\). Finally, note that, for $h = \infty$, Theorem \ref{theorem 1} from the Introduction is a particular instance of this modified version of Corollary \ref{main theorem for N inside}.

\end{remark}

\section{A weighted version of the Runge approximation result}\label{Runge type result}
This section is devoted to show the following weighted Runge approximation result.
\begin{theorem}\label{approximation theorem}
 Let \(F, G \in \mathcal{F}(\mathbb{R})\) and let \(W = (w_N)_{N \in \mathbb{N}}\) be a weight function system that satisfies \((\alpha)\), $(N)$, and $(\epsilon)_{0}$. Then, for all \(N \in \mathbb{N}\) there is \(M > N\) such that for all \(K > M\) and for all \(1<a<b<\infty\):
 \begin{equation}\label{equation 4.1}
\forall \varepsilon >0 \: \forall f \in \mathcal{U}_{w_M}(T^{a F, a G}) \, \exists g \in \mathcal{U}_{w_K}(T^{b F, b G}): \sup_{\xi \in \overline{T^{F, G}}} e^{w_N(\lvert \operatorname{Re }\xi\rvert)} \lvert f(\xi) - g(\xi)\rvert \leq \varepsilon .
 \end{equation}
 \end{theorem}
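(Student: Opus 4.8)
The plan is to deduce \eqref{equation 4.1} from a \emph{weighted Runge/Arakelian-type} statement: a function holomorphic on a neighbourhood of a (non‑compact) generalized strip can be approximated, in a prescribed weighted sup-norm, by a function holomorphic on a slightly larger strip and having prescribed rapid decay. The extra decay of the approximant — which is \emph{strictly stronger} than that of \(f\) — will be manufactured by dividing by a holomorphic minorant supplied by Lemma \ref{Lemma 2}.

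First I would fix the index \(M\): given \(N\), use Lemma \ref{Subadditivity} (i.e.\ \((\alpha)\)) to choose an auxiliary \(N_{1}>N\) and then \((N)\) to choose \(M>N_{1}\) with \(\int_{0}^{\infty}e^{w_{N_{1}}(t)-w_{M}(t)}\mathrm{d}t<\infty\); for this \(M\) one has (a short argument from \((\alpha)\) and \((N)\)) that \(w_{M}(t)-w_{N}(t)\to\infty\), so that \(\sup_{t\ge R}e^{w_{N}(t)-w_{M}(t)}\to0\) as \(R\to\infty\). Now let \(K>M\), \(1<a<b<\infty\), \(\varepsilon>0\) and \(f\in\mathcal{U}_{w_{M}}(T^{aF,aG})\) be given. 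Using \((\epsilon)_{0}\) and Lemma \ref{Lemma 2}, fix a zero-free \(Q\in\mathcal{H}(T_{h})\), with \(T^{bF,bG}\subseteq T_{h}\), such that \(e^{w_{L}(|\operatorname{Re}\xi|)}\le|Q(\xi)|\) on \(T_{h}\), where \(L>K\) is any index with \(w_{K}(2t)\le w_{L}(t)+\log A\) (such an \(L\) exists by \((\alpha)\)). The point of this choice is that \(|1/Q|\le 1\) on \(T^{bF,bG}\), and that for \emph{any} \(g=P/Q\) with \(P\in\mathcal{H}(T^{bF,bG})\) satisfying \(|P(\xi)|\le C(1+|\operatorname{Re}\xi|)^{-1}\) on \(T^{bF,bG}\), the Cauchy estimates give, on every substrip of \(T^{bF,bG}\),
\[ e^{w_{K}(|\operatorname{Re}\xi|)}\,|g^{(\alpha)}(\xi)|\ \le\ C_{\alpha}\,e^{w_{K}(|\operatorname{Re}\xi|)}\sup_{|z-\xi|=\delta}|g(z)|\ \le\ C_{\alpha}\,C\,e^{\,w_{K}(|\operatorname{Re}\xi|)-w_{L}(|\operatorname{Re}\xi|-\delta)}\ \le\ C_{\alpha}\,C\,A, \]
so that \(g\in\mathcal{U}_{w_{K}}(T^{bF,bG})\) automatically. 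Thus the surplus decay of \(Q\) simultaneously buys the decay and all the derivative bounds required of \(g\); this is affordable only because the error is measured in the much weaker weight \(w_{N}\).

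It then remains to construct such a \(P\) with \(\sup_{\overline{T^{F,G}}}e^{w_{N}}|f-P/Q|\le\varepsilon\). After interposing, via Lemma \ref{smooth interpolation}, a \(C^{\infty}\) strip \(T^{\Phi,\Psi}\) with \(\overline{T^{F,G}}+\overline{B}(0,\varepsilon_{0})\subseteq T^{\Phi,\Psi}\) and \(\overline{T^{\Phi,\Psi}}+\overline{B}(0,\varepsilon_{0})\subseteq T^{aF,aG}\), one has a Cauchy representation of \(f\) (equivalently of \(Qf\)) over the curve \(\partial(T^{\Phi,\Psi}\cap\{|\operatorname{Re}z|<n\})\), valid on \(\overline{T^{F,G}}\cap\{|\operatorname{Re}\xi|\le n-1\}\). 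One cannot apply this with a single large \(n\): the minorant \(Q\) is already large where \(|\operatorname{Re}\xi|\approx n\), so the contour runs through a zone where \(Q\) — and hence the coefficients of any rational approximant of the integral — is exponentially big, and the resulting \(P/Q\) then fails to be small just past the end of the contour. Instead I would build \(P\) \emph{incrementally along an exhaustion} \(\overline{T^{F,G}}=\bigcup_{n}K_{n}\), \(K_{n}=\overline{T^{F,G}}\cap\{|\operatorname{Re}\xi|\le n\}\): at step \(n\), Riemann-sum the Cauchy integral over a short contour and pole-push the poles into \(\mathbb{C}\setminus\overline{T^{bF,bG}}\) (possible since \(T^{bF,bG}\) is simply connected, whence \(\widehat{\mathbb C}\setminus T^{bF,bG}\) is connected) to approximate the already small difference \(f-P_{<n}/Q\) on the collar \(K_{n+1}\setminus K_{n}\); a Roth-type fusion step makes the correction \(P_{n}\) also negligible on \(K_{n}\). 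Since \(|f|\lesssim e^{-w_{M}(n)}\) on the \(n\)-th collar and \(|1/Q|\le e^{-w_{L}}\), the relevant quantities sum geometrically and stay \(\le\varepsilon\); put \(P=\sum_{n}P_{n}\). Finally split \(\overline{T^{F,G}}\) into \(\{|\operatorname{Re}\xi|\le R\}\) (close to \(f\) by construction, for \(R\) large) and \(\{|\operatorname{Re}\xi|>R\}\), where \(e^{w_{N}}|f|\lesssim e^{w_{N}-w_{M}}\) is small by the choice of \(M\), and \(e^{w_{N}}|P/Q|\le |P|\,e^{w_{N}-w_{L}}\le|P|\) is small by the decay of \(P\).

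\textbf{The hard part} is exactly the tension just described: the approximant must decay strictly faster than \(f\), yet any Cauchy contour enclosing \(\{|\operatorname{Re}\xi|\le n\}\) necessarily passes through a region where \(Q\) is exponentially large; this obstruction to a one-shot Runge argument is what forces the exhaustion/fusion construction, and it is where the real quantitative bookkeeping lives — the sizes of the collars, the mesh of the Riemann sums and the accuracy of the pole-pushing must be chosen together, and the weight-index manipulations through \((\alpha)\) and \((N)\) (including the claim \(w_{M}-w_{N}\to\infty\)) must be carried out carefully. The remaining pieces — the Cauchy representation, the classical pole-pushing and fusion lemmas, the smooth separation of strips of Lemma \ref{smooth interpolation}, and the verification \(P/Q\in\mathcal{U}_{w_{K}}(T^{bF,bG})\) above — are routine.
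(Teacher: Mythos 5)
Your proposal assembles the right ingredients (the holomorphic minorant \(Q\) from Lemma \ref{Lemma 2}, a Cauchy representation over the boundary of an interposed smooth strip, Riemann sums, pole pushing), but it misses the one idea that makes a one-shot Runge argument possible, and what you offer instead --- an exhaustion of \(\overline{T^{F,G}}\) with a Roth-type fusion step at each stage --- is precisely the part you leave unexecuted. The tension you correctly identify (any contour enclosing \(\{|\operatorname{Re}\xi|\le n\}\) passes through a region where \(Q\) is huge, so an approximant of the form \(P(\xi)/Q(\xi)\) cannot be controlled past the end of the contour) arises only because you anchor the damping factor at the variable \(\xi\) alone. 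The paper instead inserts the \emph{translated} factor \(P(z-\xi)\), with \(P=Q(0)/Q\) and \(P(0)=1\), into the Cauchy kernel: since the residue of \(f(z)P(z-\xi)/(z-\xi)\) at \(z=\xi\) is still \(f(\xi)\), the representation \(f(\xi)=\frac{1}{2\pi i}\int_{\Gamma_1\cup\Gamma_2}f(z)P(z-\xi)(z-\xi)^{-1}\,\mathrm{d}z\) holds exactly, the integral converges absolutely because \(|f(z)P(z-\xi)|\lesssim e^{-w_M(|\operatorname{Re}z|)}e^{-w_{\tilde K}(|\operatorname{Re}z-\operatorname{Re}\xi|)}\) is integrable by \((N)\) and Lemma \ref{Subadditivity}, and --- crucially --- each Riemann-sum term \(C_jP(z_j-\xi)R_j(\xi)\) already decays like \(e^{-w_{\tilde K}(|\operatorname{Re}\xi-\operatorname{Re}z_j|)}\lesssim e^{w_{\tilde K}(|\operatorname{Re}z_j|)}e^{-w_K(|\operatorname{Re}\xi|)}\), so membership in \(\mathcal{U}_{w_K}(T^{bF,bG})\) is automatic after a single pole-pushing step. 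One finite contour and one finite sum suffice; no exhaustion, no fusion, no geometric bookkeeping.

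As it stands your argument therefore has a genuine gap: the incremental construction of \(P=\sum_n P_n\) is the entire content of the proof, and you have not shown that the corrections \(P_n\) can be made simultaneously accurate on the \(n\)-th collar, negligible on \(K_n\) after fusion, and summable against the growth of \(Q\) on \(\overline{T^{bF,bG}}\); the constants in Roth's fusion lemma depend on the configuration of the sets being fused and must be controlled uniformly along an unbounded exhaustion, which is exactly where such arguments tend to break. (Two minor points in your favour: the claim \(\sup_{t\ge R}e^{w_N(t)-w_M(t)}\to 0\) does follow from \((\alpha)\) and \((N)\) via the elementary bound \(\int_{t/2}^{t}e^{w_{N_1}(s)-w_M(s)}\,\mathrm{d}s\ge \tfrac{t}{2}\,e^{w_{N_1}(t/2)-w_M(t)}\), and your observation that the derivative bounds on \(g\) come for free is subsumed by Lemma \ref{lemma-df}.) I would encourage you to replace the exhaustion scheme by the translated-kernel device above.
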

 Our proof of Theorem \ref{approximation theorem} is a variant of the standard proof of the classical Runge theorem. We need the following pole pushing lemma.

  \begin{lemma}
  \label{Pole-pushing lemma}
    Let \(K \subseteq \mathbb{C}\) be closed and let \(\alpha, \beta \in \mathbb{C}\setminus K\). Suppose that \(\alpha\) and \(\beta\) lie in the same connected component of \(\mathbb{C}\setminus K\). Then, for all \(\varepsilon > 0\) there is a rational function \(R\) with \(\beta\) as only pole such that
    \[
    \sup_{\xi \in K}\left\lvert \frac{1}{\xi - \alpha} - R(\xi)\right\rvert \leq \varepsilon .
    \]
 \end{lemma}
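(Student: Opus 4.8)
The plan is to prove the pole-pushing lemma by the standard chain-of-disks argument, tailored so that it works uniformly on the (possibly unbounded) closed set $K$. First I would reduce to the case where $\alpha$ and $\beta$ are \emph{close}. Fix $\alpha,\beta$ in the same connected component $U$ of $\mathbb{C}\setminus K$. Since $U$ is open and connected, hence path-connected, choose a compact path $\gamma \colon [0,1]\to U$ from $\alpha$ to $\beta$. The image $\gamma([0,1])$ is a compact subset of the open set $U$, so $d := \operatorname{dist}(\gamma([0,1]), K) > 0$. Pick points $\alpha = \zeta_0, \zeta_1, \dots, \zeta_n = \beta$ along $\gamma$ with $|\zeta_{j} - \zeta_{j-1}| < d$ for each $j$; in particular each $\zeta_j \notin K$ and $\operatorname{dist}(\zeta_j, K) \geq d$.

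The key observation is the following \emph{single-step} claim: if $\zeta, \zeta' \notin K$ with $|\zeta - \zeta'| < \operatorname{dist}(\zeta, K)$, then any rational function with $\zeta$ as its only pole can be uniformly approximated on $K$ by rational functions with $\zeta'$ as their only pole. Indeed, for $\xi \in K$ one has the geometric series expansion
\[
\frac{1}{\xi - \zeta} = \frac{1}{(\xi - \zeta') - (\zeta - \zeta')} = \sum_{k=0}^{\infty} \frac{(\zeta - \zeta')^k}{(\xi - \zeta')^{k+1}},
\]
which converges uniformly for $\xi \in K$ because $|\zeta - \zeta'|/|\xi - \zeta'| \leq |\zeta - \zeta'|/\operatorname{dist}(\zeta', K)$, and the latter ratio $q$ satisfies $q < 1$ once we also require $|\zeta - \zeta'| < \operatorname{dist}(\zeta', K)$ (which I will arrange by taking the step length smaller than $d$, since $\operatorname{dist}(\zeta', K) \geq d$ for the chosen $\zeta'$). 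Thus the partial sums, which are polynomials in $1/(\xi - \zeta')$, approximate $1/(\xi - \zeta)$ uniformly on $K$. Since any rational function with sole pole $\zeta$ is a polynomial in $1/(\xi - \zeta)$ (plus a polynomial in $\xi$, but the latter is already a polynomial in $1/(\xi - \zeta')$-free piece that we carry along unchanged — actually it is cleaner to note that the set of rational functions regular on $K$ with pole only at a prescribed point, together with polynomials, forms an algebra, and uniform limits compose), iterating the single-step claim finitely many times moves the pole from $\alpha = \zeta_0$ to $\beta = \zeta_n$. Concretely: approximate $1/(\xi - \alpha)$ by a polynomial in $1/(\xi - \zeta_1)$, then approximate each occurrence of $1/(\xi - \zeta_1)$ by a polynomial in $1/(\xi - \zeta_2)$, and so on; at each stage the approximation error can be made as small as we like, and composing finitely many such approximations yields a rational function $R$ with $\beta$ as its only pole satisfying $\sup_{\xi \in K}|1/(\xi - \alpha) - R(\xi)| \leq \varepsilon$.

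The main point requiring care — and the only genuine obstacle — is the uniformity over the unbounded set $K$: one must check that the geometric series converges uniformly even as $\xi \to \infty$ in $K$, and that the iterated composition does not destroy this. This is handled by the uniform bound $q = \sup_{\xi \in K} |\zeta - \zeta'|/|\xi - \zeta'| < 1$, which holds because $|\xi - \zeta'| \geq \operatorname{dist}(\zeta', K) > 0$ is bounded below independently of $\xi$; no compactness of $K$ is needed. One should also track, at the composition step, that replacing $1/(\xi-\zeta_j)$ by a polynomial $P_j(1/(\xi-\zeta_{j+1}))$ of degree $m_j$ inside a degree-$m$ polynomial expression introduces an error controlled by $m$ times the individual error times a bound on the derivative of the outer polynomial on a neighbourhood of the relevant values — all of these are finite because the values $1/(\xi-\zeta_{j+1})$ for $\xi \in K$ lie in a bounded set (again by the distance bound). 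Choosing the successive errors small enough (in reverse order, from the last step to the first) gives the claimed bound $\varepsilon$. This completes the proof.
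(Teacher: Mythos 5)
Your proof is correct and is essentially the standard pole-pushing argument that the paper itself does not write out but simply cites (the second part of the proof of Lemma 5.10 in Stein--Shakarchi). The one point where you go beyond the textbook reference --- verifying that the geometric-series ratio and the composition errors are controlled uniformly on a merely closed, possibly unbounded $K$ via the lower bound $\lvert \xi-\zeta'\rvert \geq \operatorname{dist}(\zeta',K)>0$ --- is exactly the adjustment needed for the lemma as stated, where $K$ is closed but not assumed compact.
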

 \begin{proof}
 This result is well-known, see 
 for instance the second part of the proof of \cite[Lemma 5.10]{Stein2003ComplexAnalysis}.
 \end{proof}
  We are now able to prove Theorem \ref{approximation theorem}.
  \begin{proof}[Proof of Theorem \ref{approximation theorem}]
Let \(N \in \mathbb{N}\)  be arbitrary. By Lemma \ref{Subadditivity}, there are \(L > N\) and \(A > 1\) such that, for all \(t,s \geq 0\),
  \[
  w_N(t+s) \leq w_L(t) + w_L(s) + \log A .
  \]
  Condition \((N)\) tells us that there is \(M > L\) such that 
  \begin{equation}
  \label{eq: N in use}
  \int_0^{\infty} e^{w_L(t) - w_M(t)}\mathrm{d}t < \infty .
  \end{equation}
   Take   arbitrary  \(K > M\) and \(1<a<b<\infty\). 
    Fix \(h > 2b\max\{\sup_{t \in \mathbb{R}} F(t), \sup_{t \in \mathbb{R}} G(t)\}\) and pick an arbitrary \(f \in \mathcal{U}_{w_M}(T^{a F, a G})\).  By Lemma \ref{Subadditivity}, there are \(\tilde{K} > K\) and \(C > 1\) such that, for all  
\(t, s \geq 0\),
\[
  w_K(t+s) \leq w_{\tilde{K}}(t) + w_{\tilde{K}}(s) + \log C. 
\]
 Lemma \ref{Lemma 2} 
allows us to find
  \(Q \in \mathcal{H}(T_h)\) such that $\lvert Q(\xi) \rvert   \geq e^{w_{\tilde{K}}(\lvert \mathrm{Re \ }\xi\rvert)} $
 for all \(\xi \in T_h\). Define 
 \(P = Q(0)/Q\). Fix $c \in (1,a)$. By Lemma \ref{smooth interpolation} (applied to $cF$ and $cG$ and with \(1/c\) playing the role of \(a\) in Lemma \ref{smooth interpolation}), we can choose \(r> 0\)  and \(\Phi, \Psi \in \mathcal{F}( \mathbb{R}) \cap C^{\infty}(\mathbb{R})\), with all derivatives bounded, such that \(\overline{T^{F,G}} + \overline{B}(0,r) \subseteq T^{\Phi, \Psi}\) and \(\overline{T^{\Phi,\Psi}} \subseteq T^{cF,cG}\). We now show that we can represent \(f(\xi)\) as an (improper) contour integral along the boundary of \(T^{\Phi,\Psi}\) for all \(\xi \in \overline{T^{F,G}}\). More precisely, we have: 
  \begin{claim 1}\emph{
  Define the contours \(\Gamma_1 = \{t+i\Phi(t)\mid t\in\mathbb{R}\}\) and \(\Gamma_2 = \{t-i\Psi(t)\mid t \in \mathbb{R}\}\). Orient \(\Gamma_1\) from \enquote*{right to left} and \(\Gamma_2\) from \enquote*{left to right}. Then, \(f(\xi) = I_{1}(\xi)+I_{2}(\xi)\) for all \(\xi \in \overline{T^{F,G}}\)  
  , where
    $$
     I_k(\xi) = \frac{1}{2\pi i} \int_{\Gamma_k} \frac{f(z)P(z-\xi)}{z-\xi}\mathrm{d}z, \qquad k = 1,2.
      $$
      }
      \end{claim 1}
       \begin{claimproof}
     Let  \(\xi \in \overline{T^{F, G}}\) be arbitrary.   For \(R > \lvert \mathrm{Re \ }\xi\rvert\),  we set       \begin{align*}
      \Gamma_1^R &= \{t+i\Phi(t)\vert t \in [-R,R]\}, \\
      \Gamma_2^R &= \{t-i\Psi(t)\vert t \in [-R,R]\}, \\
      \Gamma_3^R &= [R-i\Psi(R), R+i\Phi(R)], \\
      \Gamma_4^{R} &= [-R-i\Psi(-R), -R+i\Phi(-R)] ,
      \end{align*}
      where we orient the closed contour $\bigcup_{i=1}^{4} \Gamma^{R}_{i}$ counterclockwise.       Since \(P(0) = 1\), the Cauchy integral formula yields 
\begin{equation*}
      f(\xi) 
      = \sum_{k=1}^4 I_k(\xi,R) ,
      \end{equation*}
      where 
  $$
    I_k(\xi,R) = \frac{1}{2\pi i}\int_{\Gamma_k^R} \frac{f(z)P(z-\xi)}{z-\xi}\mathrm{d}z, \qquad k  =1,2,3,4 .
$$
It suffices to show that  $\lim_{R\to\infty} I_k(\xi,R) =I_k(\xi)$ for $k = 1,2$ and  $\lim_{R\to\infty}  I_k(\xi,R) =0$ for $k =3,4$. We only consider $k =1,3$, as the other cases can be treated similarly. We have, as $R\to\infty$, 
\begin{align*}
&\lvert I_1(\xi)-I_1(\xi,R)\rvert \leq e^{w_N(\lvert \operatorname{Re} \xi\rvert)}\lvert I_1(\xi)-I_1(\xi,R)\rvert \\ &\leq \frac{A}{2\pi}\int_{t\in \mathbb{R}, \lvert t\rvert \geq R} \frac{e^{w_L(\lvert t \rvert)}\lvert f(t+i\Phi(t))\rvert e^{w_L(\lvert t - \operatorname{Re}\xi\rvert)}\lvert P(t+i\Phi(t)-\xi)\rvert}{\lvert t+i\Phi(t) - \xi\rvert} \lvert 1+i\Phi^{\prime}(t)\rvert \mathrm{d}t\\
&\leq \frac{A \lvert Q(0)\rvert}{2\pi r} (1+\sup_{s\in\mathbb{R}}\lvert \Phi^{\prime}(s)\rvert) \sup_{z\in \overline{T^{cF, cG}}}(e^{w_M(\lvert \operatorname{Re}z\rvert)} \lvert f(z)\rvert)  \int_{t\in\mathbb{R}, \lvert t \rvert \geq R} e^{w_L(\lvert t \rvert) - w_M(\lvert t \rvert)} \to 0 ,
\end{align*}
in view of \eqref{eq: N in use},
 and 
\begin{align*}
\lvert I_3(\xi,R)| &\leq \frac{\lvert Q(0)\rvert}{2\pi}\int_{\Gamma_3}\frac{\lvert f(z)\rvert}{\lvert z-\xi\rvert}\lvert \mathrm{d}z\rvert \\&\leq \frac{
\lvert Q(0)\rvert}{2\pi}\sup_{z \in \overline{T^{cF,cG}}} \lvert f(z)\rvert\frac{\sup_{t \in \mathbb{R}} \Phi(t) + \sup_{t \in \mathbb{R}} \Psi(t)}{R-\lvert \mathrm{Re \ }\xi\rvert} \to 0.  
\end{align*}
\end{claimproof}

An inspection of the proof of the previous Claim shows for \(k=1,2\), as \(R \to \infty\), \[\sup_{\xi\in \overline{T^{F,G}}}e^{w_N(\lvert \operatorname{Re}\xi\rvert)}\lvert I_k(\xi) - I_k(\xi,R)\rvert \to 0 .\] Hence,      \begin{equation}\label{uniform integral}
    \lim_{R\to \infty}\sup_{\xi \in \overline{T^{F, G}}}e^{w_N(\lvert \operatorname{Re} \xi\rvert)}\bigg\lvert  f(\xi) - 
I_1(\xi,R)-I_2(\xi,R)\bigg\rvert 
    = 0 .
    \end{equation}
    The idea is now to use the integral representation we just proved to approximate \(f\)  uniformly on \(\overline{T^{ F, G}}\), with respect to the weight \(e^{w_N}\), via Riemann sums. Hereafter, we will approximate these Riemann sums uniformly, again with respect to \(e^{w_N}\), on \(\overline{T^{F, G}}\) with functions belonging to \(\mathcal{U}_{w_K}(T^{b F, b G})\) by using Lemma \ref{Pole-pushing lemma} (pole pushing). From now on we fix an arbitrary \(\varepsilon > 0\).
        \begin{claim 2}
        \emph{
        There are \(J \in \mathbb{N}\), 
        \(z_1,\ldots,z_J  \in \Gamma_1 \cup \Gamma_2\), 
         and  \(C_1,\ldots,C_J \in \mathbb{C}\)  such that
        \[
        \sup_{\xi \in \overline{T^{F, G}}} e^{w_N(\lvert \operatorname{Re} \xi \rvert)}\bigg\lvert f(\xi) - \sum_{j=1}^J \frac{C_j  P(z_j-\xi)}{z_j-\xi}\bigg\rvert \leq \frac{\varepsilon}{2} .
        \]}
    \end{claim 2}
    \begin{claimproof}
        Equation \eqref{uniform integral} implies that there is \(R>0\) such that
        \[
        \sup_{\xi \in \overline{T^{F, G}}} e^{w_N(\lvert \operatorname{Re} \xi\rvert)}\bigg\lvert f(\xi) -  I_1(\xi,R)-I_2(\xi,R)\bigg\rvert \leq \frac{\varepsilon}{4} .
        \]
        Hence, it suffices to show, for $k = 1,2$, that there are \(J_k \in \mathbb{N}\),  \(z_1,\ldots,z_{J_k} \in \Gamma_{k}\) and  \(C_1,\ldots,C_{J_k} \in \mathbb{C}\)  such that
 \begin{equation}
  \label{claim2}
        \sup_{\xi \in \overline{T^{F, G}}} e^{w_N(\lvert \operatorname{Re}\xi\rvert)}\bigg\lvert I_k(\xi,R)- \sum_{j=1}^{J_k} \frac{C_j  P(z_j-\xi)}{z_j-\xi}\bigg\rvert \leq \frac{\varepsilon}{8} .
 \end{equation}
 We only consider $k=1$, as $k =2$ is completely analogous.
        For convenience, define 
        \[H_1(t,\xi) = -\frac{1}{2\pi i}\frac{f(t+i\Phi(t))P(t+i\Phi(t)-\xi)}{t+i\Phi(t)-\xi} (1+i\Phi^{\prime}(t))\] 
                for \((t,\xi) \in [-R,R]\times \overline{T^{F, G}}\).
 Then, for all \(t\in [-R,R]\) and \(\xi \in \overline{T^{F, G}}\) with \(\lvert \operatorname{Re }\xi \rvert > R\),
\begin{align*}
         e^{w_N(\lvert \operatorname{Re}\xi \rvert)}\lvert H_1(t,\xi) \rvert  &\leq \frac{A \lvert Q(0)\rvert \sup_{z \in \overline{T^{cF, cG}}} (e^{w_M(\lvert \mathrm{Re \ }z\rvert)} \lvert f(z)\rvert)}{2\pi (\lvert \mathrm{Re \ }\xi\rvert-R)} (1+\sup_{t\in\mathbb{R}} \lvert \Phi^{\prime}(t)\rvert) \\
            &\to 0 \text{ as } \lvert \operatorname{Re} \xi\rvert \to \infty.
\end{align*}
Hence, there is \(B > R\) such that, for all \(t \in [-R,R]\) and \(\xi \in \overline{T^{F,G}}\) with \(\lvert \operatorname{Re} \xi \rvert > B\),
\[e^{w_N(\lvert \operatorname{Re}\xi\rvert)} \lvert H_1(t,\xi)\rvert \leq \frac{\varepsilon}{32R}.\]
The continuity of \(H_1\) insures that there is \(\delta > 0\) such that,
for all \(t,t^{\prime}\in [-R,R]\) and \(\xi \in \overline{T^{F,G}}\) with \(\lvert \operatorname{Re}\xi \rvert \leq B\), 
\[
\lvert t - t^{\prime}\rvert \leq \delta \implies e^{w_N(\lvert \operatorname{Re} \xi\rvert)}\lvert H_1(t,\xi)-H_1(t^{\prime},\xi)\rvert 
\leq \frac{\varepsilon}{16R} .
\]

            Choose now \(J_1 \in \mathbb{N}\) so large that \(2R / J_1\leq \delta\). Define \(t_j = -R +  2jR/J_1\) for \(j= 1,\ldots,J_1\).         Then,
      \begin{align*}
            &\sup_{\xi \in \overline{T^{F, G}}}  e^{w_N(\lvert \operatorname{Re} \xi\rvert)}\bigg\lvert I_1(\xi,R)-\sum_{j=1}^{J_1} H_1(t_j,\xi)\frac{2R}{J_1} \bigg\rvert \\
            &\leq   \sup_{\xi \in \overline{T^{F, G}}}e^{w_N(\lvert \operatorname{Re}\xi\rvert)}
            \bigg\lvert \int_{-R}^R H_1(t,\xi)\mathrm{d}t - \sum_{j=1}^{J_1} H_1(t_j,\xi) \frac{2R}{J_1}\bigg\rvert 
            \leq \sum_{j=1}^{J_1} \frac{\varepsilon}{16R}  \frac{2R}{J_1} 
            = \frac{\varepsilon}{8}.
            \end{align*}
This shows \eqref{claim2} for $k =1$.
     \end{claimproof}
     
    Next, we  approximate the function  \(\displaystyle \sum_{j=1}^J C_j P(z_j - \xi)(z_j-\xi)^{-1}\) from the previous Claim uniformly on \(\overline{T^{F, G}}\) with respect to the weight \(e^{w_N}\).
    \begin{claim 3}\emph{
    There are rational functions $R_1, \ldots, R_J$ such that
    \begin{enumerate}[label=(\roman*)]
    \item For all $j = 1, \ldots, J$ we have  
    $$R_j(z) = \sum_{k=1}^{d_j} \gamma_{j,k}(z-\alpha_j)^{-k}
    $$
    with  \(\alpha_j \in \mathbb{C}\setminus \overline{T^{b F, b G}}\),  \(\gamma_{j,k} \in \mathbb{C}\), and \(d_j \in \mathbb{N}\).  In particular, \(\sup_{z\in \overline{T^{b F, b G}}} \lvert R_j(z)\rvert < \infty\) for all  $j = 1, \ldots, J$.
    \item  \(\ \displaystyle \operatorname*{sup}_{\xi \in \overline{T^{F, G}}}e^{w_N(\lvert \operatorname{Re}\xi\rvert)}\big\lvert \sum_{j=1}^J C_j  P(z_j-\xi)(z_j-\xi)^{-1}-\sum_{j=1}^J C_j  P(z_j-\xi)  R_j(\xi) \big\rvert \leq \displaystyle \frac{\varepsilon}{2}\).
    \end{enumerate}}
\end{claim 3}
\begin{claimproof}
    It holds that  \[D =  \max_{1\leq j \leq J}\sup_{\xi \in \overline{T^{F, G}}} e^{w_N(\lvert \operatorname{Re}\xi\rvert)}\lvert C_jP(z_j-\xi)\rvert \leq  A \lvert Q(0)\rvert \max_{1\leq j\leq J}\lvert C_j\rvert e^{w_L(\lvert \operatorname{Re} z_j \rvert)} < \infty .\]    
  Choose a complex number \(\alpha_j \in \mathbb{C}\setminus \overline{T_h}\) for each $j = 1, \ldots J,$  such that the line segment \([\alpha_j, z_j]\) lies in \(\mathbb{C}\setminus \overline{T^{F, G}}\). 
    By Lemma \ref{Pole-pushing lemma}, we 
  obtain
     a rational function \(R_j\) with \(\alpha_j\) as only pole that satisfies \[\sup_{\xi \in \overline{T^{F, G}}}\bigg\lvert \frac{1}{z_j-\xi}-R_j(\xi)\bigg\rvert \leq \frac{\varepsilon}{2D  J}.\]
This implies the result.
  \end{claimproof}

Combining the two previous Claims, we find   \(J \in \mathbb{N}\), \(z_1,\ldots,z_J \in \Gamma_1 \cup \Gamma_2\), and   rational functions \(R_1,\ldots,R_J\) with poles in \(\mathbb{C}\setminus \overline{T^{bF, bG}}\)
 and bounded on \(\overline{T^{bF, b G}}\)
such that
\begin{equation}\label{app}
\sup_{\xi\in \overline{T^{F, G}}} e^{w_N(\lvert \operatorname{Re}\xi \rvert)}\bigg\lvert f(\xi) - \sum_{j=1}^J C_j  P(z_j-\xi) R_j(\xi) \bigg\rvert \leq \varepsilon . 
\end{equation}
Now define
\[
g(\xi) = \sum_{j=1}^J C_j  P(z_j-\xi) R_j(\xi) ,
\]
for \(\xi \in T^{bF, bG}\).
Clearly, \(g\) belongs to \(\mathcal{H}(T^{bF,bG})\), satisfies the inequality in  \eqref{equation 4.1}, and, since
\begin{align*}
    \sup_{\xi \in \overline{T^{bF,bG}}} (e^{w_K(\lvert \mathrm{Re \ }\xi)} \lvert g(\xi)\rvert) &\leq C\sup_{\xi \in \overline{T^{bF,bG}}} \sum_{j=1}^J \lvert C_j\rvert e^{w_{\tilde{K}}(\lvert \mathrm{Re \ }\xi - \mathrm{Re \ }z_j\rvert)}e^{w_{\tilde{K}}(\lvert \mathrm{Re \ }z_j\rvert)} \lvert P(z_j-\xi)\rvert \lvert R_j(\xi)\rvert \\& \leq C
    |Q(0)|
    \sum_{j=1}^J \lvert C_j\rvert e^{w_{\tilde{K}}(\lvert \mathrm{Re \ }z_j\rvert)}\sup_{\xi \in \overline{T^{bF,bG}}}\lvert R_j(\xi)\rvert  < \infty ,
\end{align*}
we 
have \(g \in \mathcal{U}_{w_K}(T^{bF,bG})\). 
\end{proof}
\section{Proof of the main result}\label{proof main theorem}

This section is devoted to the proof of Theorem \ref{main theorem}.  We start with an auxiliary result that is inspired by the proof of the well-known fact, due to Grothendieck, that an elliptic constant coefficient partial differential operator  \(P(D) \colon C^{\infty}(\Omega) \rightarrow C^{\infty}(\Omega)\), \(\Omega \subseteq \mathbb{R}^d\) open and \(d\geq 2\), does not have a continuous linear right inverse; see for example \cite[Appendix C]{treves1967convexandePDE}. 
\begin{proposition}\label{non-triviality kernel} Let \(F, G \in \mathcal{F}(\mathbb{R})\) and let \(W = (w_N)_{N\in \mathbb{N}}\) be a weight function system.
 Let \(P(D)\) be an  elliptic constant coefficient partial differential operator  with $\operatorname{deg} P \geq 1$. Suppose that \(P(D) \colon \mathcal{K}_{W}(T^{F,G}) \rightarrow \mathcal{K}_{W}(T^{F,G})\) is surjective. Then, the space \(\operatorname{ker }P(D) \subseteq \mathcal{K}_{W}(T^{F,G})\) is  non-trivial.
\end{proposition}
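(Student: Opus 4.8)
I would argue by contradiction, following the scheme behind Grothendieck's theorem. Suppose \(P(D)\colon \mathcal{K}_W(T^{F,G})\to \mathcal{K}_W(T^{F,G})\) is surjective but \(\ker P(D)=\{0\}\). As \(\mathcal{K}_W(T^{F,G})\) is a Fréchet space, the open mapping theorem then makes \(P(D)\) a topological isomorphism, so it admits a continuous linear (two-sided) inverse \(R\). The aim is to use \(R\), a fundamental solution of \(P(D)\), and elliptic regularity to produce a nonzero element of \(\ker P(D)\), the desired contradiction. Two simple facts are used repeatedly: first, \(C^\infty_c(T^{F,G})\subseteq \mathcal{K}_W(T^{F,G})\), since any \(\varphi\in C^\infty_c(T^{F,G})\) is supported in some \(\overline{T^{aF,aG}}\) with \(a<1\), on which every weight \(w_N(|\mathrm{Re}\,\cdot|)\) is bounded; second, since \(P(D)\) is elliptic with \(\deg P\geq 1\), it has a fundamental solution \(E\) on \(\mathbb{R}^2\cong\mathbb{C}\) with \(\operatorname{sing\,supp}E=\{0\}\), it is analytically hypoelliptic (so distributional solutions of \(P(D)u=0\) are real-analytic), and it enjoys the unique continuation property.

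The construction goes as follows. Fix an interior point \(b\in T^{F,G}\) and \(\chi\in C^\infty_c(T^{F,G})\) with \(\chi\equiv 1\) near \(b\). Then \(g:=P(D)\big(\chi E(\cdot-b)\big)-\delta_b=[P(D),\chi]\,E(\cdot-b)\) is smooth and compactly supported in \(T^{F,G}\) (its support avoids \(b\), where \(E(\cdot-b)\) is singular), so \(g\in \mathcal{K}_W(T^{F,G})\); put \(v:=R(g)\in \mathcal{K}_W(T^{F,G})\), so that \(P(D)v=g\). Then \(\mathcal{E}:=\chi E(\cdot-b)-v\) satisfies \(P(D)\mathcal{E}=\delta_b\), i.e.\ \(\mathcal{E}\) is a fundamental solution of \(P(D)\) at \(b\) on \(T^{F,G}\) which, outside \(\operatorname{supp}\chi\), coincides with the rapidly decaying function \(-v\in\mathcal{K}_W(T^{F,G})\). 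Now apply a first-order operator \(L_b\) that kills the singular part of \(\mathcal{E}\) at \(b\): for \(P(D)=\overline{\partial}\) take \(L_b\) = multiplication by \(z-b\) (so \(\overline{\partial}(L_b\mathcal{E})=(z-b)\overline{\partial}\mathcal{E}=(z-b)\delta_b=0\)), and for \(P(D)=\Delta\) take the Euler field \(L_b=(x-b_1)\partial_x+(y-b_2)\partial_y\), for which \([\Delta,L_b]=2\Delta\) and \(L_b\delta_b=-2\delta_b\) give \(\Delta(L_b\mathcal{E})=2\delta_b-2\delta_b=0\); for a general elliptic \(P(D)\) one uses a polynomial multiplier or differential operator built from \(\deg P\) in the same way. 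In every case \(\phi:=L_b\mathcal{E}\in\ker P(D)\), and \(\phi\) is not identically zero, since it extends continuously through \(b\) with the nonzero value given by the ``residue'' of \(E\) (\(\phi(b)=1/\pi\) for \(\overline{\partial}\), \(\phi(b)=1/(2\pi)\) for \(\Delta\)). Moreover, writing \(\phi=L_b\big(\chi E(\cdot-b)\big)-L_b v\), the first summand lies in \(C^\infty_c(T^{F,G})\subseteq \mathcal{K}_W(T^{F,G})\) (e.g.\ for \(\overline{\partial}\) it equals \(\chi/\pi\)), while \(L_b v\) is obtained from \(v\in\mathcal{K}_W(T^{F,G})\) by differentiating and multiplying by a degree-one polynomial.

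The remaining point --- and the step I expect to be the main obstacle --- is to show that \(\phi\in \mathcal{K}_W(T^{F,G})\), equivalently that \(L_b v\in \mathcal{K}_W(T^{F,G})\). This is \emph{not} automatic for an arbitrary weight function system: multiplying by the factor \(z-b\) costs a factor \(|\mathrm{Re}\,z|\), which the gap between the weights \(w_N\) need not absorb. I would overcome this by invoking surjectivity a second time on the ``twisted'' datum: pick \(\psi\in C^\infty_c(T^{F,G})\) such that \(L_b v\) is a (a priori non-\(\mathcal{K}_W\)) solution of \(P(D)(\cdot)=\psi\) --- for \(\overline{\partial}\) one takes \(\psi=(z-b)g\in C^\infty_c(T^{F,G})\), and then \(\overline{\partial}\big((z-b)v\big)=(z-b)g\) --- and solve \(P(D)v'=\psi\) with \(v'\in\mathcal{K}_W(T^{F,G})\). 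Then \(L_b v-v'\) is a smooth function on \(T^{F,G}\) lying in \(\ker P(D)\), i.e.\ holomorphic (resp.\ harmonic). The plan is to prove directly that this function belongs to \(\mathcal{K}_W(T^{F,G})\) --- this is the delicate estimate, where the strength of the surjectivity hypothesis, ruling out the ``flat-weight'' pathology that would otherwise keep \(\mathcal{K}_W(T^{F,G})\) from being stable under multiplication by \(z-b\), has to be used --- whence, by \(\ker P(D)=\{0\}\), it vanishes, so \(L_b v=v'\in\mathcal{K}_W(T^{F,G})\). Consequently \(\phi=L_b(\chi E(\cdot-b))-v'\in\mathcal{U}_W(T^{F,G})\), and since \(\phi\neq 0\) this contradicts \(\ker P(D)=\{0\}\), completing the proof. (Here the derivative-free description of \(\mathcal{U}_W(T^{F,G})\) from Lemma~\ref{lemma-df} and elliptic regularity are convenient in passing from bounds on the function to membership in \(\mathcal{K}_W(T^{F,G})\).)
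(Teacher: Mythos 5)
Your proposal has a genuine gap at exactly the point you flag as ``the delicate estimate,'' and that gap is not closable in the generality of the Proposition. To conclude from $\ker P(D)|_{\mathcal{K}_W}=\{0\}$ that $L_b v-v'=0$, you must first show that $L_b v-v'$ belongs to $\mathcal{K}_W(T^{F,G})$; being a smooth (indeed holomorphic/harmonic) solution of $P(D)u=0$ on $T^{F,G}$ is not enough, since the kernel of $P(D)$ on $C^\infty(T^{F,G})$ is of course huge. You offer no argument for this membership beyond the hope that ``the strength of the surjectivity hypothesis'' rules out the obstruction, but the Proposition assumes \emph{nothing} about $W$ beyond its being a weight function system: for $v\in\mathcal{K}_W(T^{F,G})$ one only gets $|(z-b)v(z)|\lesssim (1+|\mathrm{Re}\,z|)e^{-w_N(|\mathrm{Re}\,z|)}$, and absorbing the factor $1+|\mathrm{Re}\,z|$ into a smaller weight $e^{-w_M}$ requires $w_N-w_M\gtrsim\log(1+t)$, which is not available. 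Nor does $v'$ help: it is just \emph{some} $\mathcal{K}_W$-solution of $P(D)v'=\psi$, and there is no mechanism forcing the particular solution $L_bv$ to differ from it by a rapidly decaying function. So the construction of $\phi$ does not land in $\mathcal{K}_W(T^{F,G})$, and the contradiction is never reached. (A secondary weakness: the operator $L_b$ for a general elliptic $P(D)$ of degree $\geq 1$ is only gestured at.)

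For comparison, the paper avoids weighted estimates on the fundamental solution entirely. It also inverts $P(D)$ by the open mapping theorem, but then exploits the \emph{continuity estimate} for the inverse $I$: a single seminorm bound $\sup_{\overline{T^{aF,aG}}}|I(f)|\leq A\,p_{N,\alpha,b}(f)$ involves only a proper substrip $\overline{T^{bF,bG}}$. Choosing a small ball $B(z,\varepsilon)$ in $T^{F,G}$ \emph{disjoint} from $\overline{T^{cF,cG}}$, $c=\max\{a,b\}$, one sees that for $\varphi\in\mathcal{D}(B(z,\varepsilon))$ the right-hand side vanishes, so $I(\varphi)\equiv 0$ on $T^{aF,aG}$; since $P(D)I(\varphi)=\varphi=0$ off $\operatorname{supp}\varphi$, analytic hypoellipticity and unique continuation force $I(\varphi)\in\mathcal{D}(B(z,\varepsilon))$. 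Hence $P(D)$ is surjective on $\mathcal{D}(B(z,\varepsilon))$, so $P(-D)$ is injective on $\mathcal{D}'(B(z,\varepsilon))$ --- contradicting the existence of exponential solutions. You may want to compare this localization-and-duality route with your explicit construction: it sidesteps precisely the weight-stability issue on which your argument founders.
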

\begin{proof}
Suppose that \(\operatorname{ker }P(D) \subseteq \mathcal{K}_{W}(T^{F,G})\) is trivial. Then, \(P(D) \colon \mathcal{K}_{W}(T^{F,G}) \to   \mathcal{K}_{W}(T^{F,G})\)  is a continuous linear bijection. By the open mapping theorem, it is then an isomorphism between Fréchet spaces. Hence, this map has a continuous linear inverse \(I \colon \mathcal{K}_{W}(T^{F,G}) \rightarrow \mathcal{K}_{W}(T^{F,G})\). 
  Fix \(a \in (0,1)\). There are \(b \in (0,1)\), \(A >0 \), and \(N \in \mathbb{N}\) such that, for all \(f \in \mathcal{K}_{W}(T^{F,G})\),
  \begin{equation}\label{fundamental estimate}
  \sup_{\xi \in \overline{T^{aF, aG}}} \lvert I(f)(\xi) \rvert \leq A \sup_{\xi \in \overline{T^{bF,bG}}, \lvert \alpha \rvert \leq N} e^{w_N (\lvert \mathrm{Re \ }\xi\rvert)}\lvert f^{(\alpha)}(\xi)\rvert .
  \end{equation}
  Set \(c = \max\{a,b \}\). Next, take \(z \in T^{F,G} \setminus \overline{T^{cF,cG}}\) and \(\varepsilon > 0\) such that \(\overline{B}(z,\varepsilon) \subseteq T^{F,G} \setminus \overline{T^{cF,cG}}\). As usual, set \(\mathcal{D}(B(z,\varepsilon)) = \{ \varphi \in C^\infty(\mathbb{R}^2) \mid \operatorname{supp } \varphi \subseteq B(z,\varepsilon)\} \). We 
 now prove 
  the following claim.
  \begin{claim}\emph{
 \(I(\mathcal{D}(B(z,\varepsilon))) \subseteq \mathcal{D}(B(z,\varepsilon))\). Consequently, \(P(D) \colon \mathcal{D}(B(z,\varepsilon)) \rightarrow \mathcal{D}(B(z,\varepsilon))\) is surjective.}
  \end{claim}
  \begin{claimproof}
 Take an arbitrary \(\varphi \in \mathcal{D}(B(z,\varepsilon))\) and choose \(r \in (0,\varepsilon)\) such that \(\operatorname{supp }\varphi\subseteq B(z,r)\). On the one hand, we have that \(P(D)(I(\varphi))(\xi) = \varphi(\xi) = 0\)  for all \(\xi \in T^{F,G} \setminus \overline{B}(z,r)\). Hence, as $P(D)$ is elliptic, we obtain that \(I(f)\) is real analytic on the open domain \(T^{F,G} \setminus \overline{B}(z,r)\). 
 On the other hand, inequality \eqref{fundamental estimate} implies that \(I(\varphi) \equiv 0\) on \(T^{aF,aG}\). The uniqueness property for real analytic functions yields 
  \(I(\varphi)\equiv 0\) on the open domain \(T^{F,G} \setminus \overline{B}(z,r)\). Hence, \(I(\varphi) \in \mathcal{D}(B(z,\varepsilon))\). 
 \end{claimproof}
 
Since  \(P(D) \colon \mathcal{D}(B(z,\varepsilon)) \rightarrow \mathcal{D}(B(z,\varepsilon))\) is surjective, its transpose \( P(D)^t = P(-D): \\ \mathcal{D}^{\prime}(B(z,\varepsilon)) \rightarrow \mathcal{D}^{\prime}(B(z,\varepsilon))\) is injective. This means that \(\operatorname{ker }P(-D) \subseteq  \mathcal{D}^{\prime}(B(z,\varepsilon))\) is trivial. However, this is a contradiction, since,  due to the fundamental theorem of algebra, there are always \(\lambda, \mu\in \mathbb{C}\) such that \(f(x,y) = e^{\lambda x+\mu y}\) satisfies \(P(-D)f = 0\). 
\end{proof}
We are ready to show Theorem \ref{main theorem}.  
\begin{proof}[Proof of Theorem \ref{main theorem}]
 \((iii)\implies (i)\). We use the abstract Mittag-Leffler lemma (Proposition \ref{Mittag-Leffler}) to show this implication. Let \((a_N)_{N\in\mathbb{N}}\) be an increasing positive sequence that converges to \(1\). We define the projective spectrum \(\mathcal{X}\) as the sequence \((\mathcal{K}_{w_N}(T^{a_NF,a_NG}))_{N \in \mathbb{N}}\) together with the natural restriction maps \(\rho_M^N \colon \mathcal{K}_{w_M}(T^{a_M F,a_M G})  \rightarrow \mathcal{K}_{w_N}(T^{a_N F,a_N G})\). 
  Consider the morphism \(f \colon \mathcal{X} \rightarrow \mathcal{X}\) given by
  \[(\overline{\partial} \colon \mathcal{K}_{w_N}(T^{a_N F, a_N G}) \rightarrow \mathcal{K}_{w_N}(T^{a_N F, a_N G}))_{N\in \mathbb{N}}.\]
  Then, \(\operatorname{Proj } \mathcal{X} = \mathcal{K}_{W}(T^{F,G})\) and \(\operatorname{Proj} f: \operatorname{Proj } \mathcal{X} \to \operatorname{Proj } \mathcal{X}\) coincides with \(\allowdisplaybreaks \overline{\partial} \colon \mathcal{K}_{W}(T^{F,G}) \rightarrow \mathcal{K}_{W}(T^{F,G})\).
Hence, by
Theorems \ref{Mittag-Leffler} and  \ref{density condition}, it suffices to show the following two properties: 
 \begin{enumerate}
     \item[$(AP)$] For all \(N \in \mathbb{N}\) there is \(M > N\) such that for all \(K > M\):
     \[
     \rho_M^N(\mathcal{U}_{w_M}(T^{a_M F, a_M G})) \subseteq \overline{\rho_{K}^N(\mathcal{U}_{w_K}(T^{a_K F, a_K G}))}^{\mathcal{U}_{w_N}(T^{a_N F, a_N G})} .
     \]
     \item[$(LP)$] For all \(N \in \mathbb{N}\) there is \(M > N\) such that for all \(g \in \mathcal{K}_{w_M}(T^{a_M F, a_M G})\) there is \(f \in \mathcal{K}_{w_N}(T^{a_N F, a_N G})\) such that    $\overline{\partial} f = g$ on $T^{a_N F, a_N G}$.
 \end{enumerate}
 
In view of Lemma \ref{lemma-df}, property $(AP)$ follows from Theorem \ref{approximation theorem} with \(a = a_M/a_N\) and \(b = a_K / a_N\).  We now establish property $(LP)$. 
Let \(N \in \mathbb{N}\) be arbitrary.
 Using Lemma \ref{Subadditivity}, 
we find \(M > N\) and \(C > 1\) such that, for all \(t,s \geq 0\),
    \[
    w_N(t+s) \leq w_M(t)+w_M(s) + \log C .
    \]
Let  \(g \in \mathcal{K}_{w_M}(T^{a_M F, a_M G})\) be arbitrary. Condition \((N)\) implies 
 that there is \(K > M\) such that
\begin{equation}
\label{condNN}
    \int_0^{\infty}e^{w_M(t) - w_K(t)}\mathrm{d}t < \infty .
 \end{equation}
    Choose \(h > 2a_M\max\{\sup_{t\in\mathbb{R}} F(t), \sup_{t\in \mathbb{R}}G(t)\}\). By Lemma \ref{Lemma 2}, there is \(Q \in \mathcal{H}(T_h)\) such that  $\lvert Q(\xi)\rvert \geq e^{w_K(\lvert \mathrm{Re \ }\xi\rvert)}$ for all  $\xi \in T_h$. Define  \(P = Q(0)/Q\). Fix \(b \in (a_N , a_M)\). Lemma \ref{Lemma 1} allows us to choose \(\varphi \in C^{\infty}(\mathbb{R}^2)\) with \(\operatorname{ supp }\varphi \subseteq T^{b F, b G}\) such that \(\varphi \equiv 1\) on \(\overline{T^{a_NF, a_NG}}\) and $\sup_{z \in \mathbb{C}} |\varphi^{(\alpha)}(z)| < \infty$ for all $\alpha \in \mathbb{N}^2$. Set  \( \displaystyle f = (g\varphi)*(P(z)/(\pi z)) \in C^\infty(T^{a_NF,a_NG})\). Since $P(0) = 1$, \(P \in \mathcal{H}(T_h)\), \((\pi z)^{-1}\) is a fundamental solution of \(\overline{\partial}\), and \(\varphi \equiv 1\) on \(T^{a_NF, a_NG}\), we have 
     \(\overline{\partial} f = g \) on \(T^{a_NF, a_NG}\). 
    We now show that \(f \in  \mathcal{K}_{w_N}(T^{a_NF,a_NG})\). 
 The relation \eqref{condNN} and the integrability of $z^{-1}$ near $0\in\mathbb{C}$      imply that   
    $$
    \int_{T_h} e^{w_M(\lvert \mathrm{Re \ }z \rvert)}\frac{\lvert P(z)\rvert}{\pi\lvert z \rvert}\mathrm{d}x\mathrm{d}y  < \infty.
    $$
Hence, we have, for all \(\alpha \in \mathbb{N}^2\) and \(\xi \in T^{a_NF, a_NG}\),
\begin{align*}
       &  e^{w_N (\lvert \mathrm{Re \ }\xi\rvert)} \lvert f^{(\alpha)}(\xi)\rvert \\
        & \leq e^{w_N(\lvert \mathrm{Re \ }\xi\rvert)}\sum_{\beta \leq \alpha}\binom{\alpha}{\beta}\   \int_{\xi - T^{bF, bG}} 
        \lvert g^{(\beta)}(\xi-z)\rvert \lvert \varphi^{(\alpha - \beta)}(\xi-z)\rvert \frac{\lvert P(z)\rvert}{\pi \lvert z\rvert}\mathrm{d}x\mathrm{d}y \\
        &\leq C \sum_{\beta \leq \alpha} \binom{\alpha}{\beta} \sup_{w\in \mathbb{C}}\lvert \varphi^{(\alpha - \beta)}(w)\rvert \sup_{w\in \overline{T^{bF, bG}}} (e^{w_M(\lvert \mathrm{Re \ }w\rvert}\lvert g^{(\beta)}(w)\rvert) \int_{T_h} e^{w_M(\lvert \mathrm{Re \ }z \rvert)}\frac{\lvert P(z)\rvert}{\pi\lvert z \rvert}\mathrm{d}x\mathrm{d}y \\
        &< \infty .
    \end{align*}

  \((i)\implies (iv)\).  This follows from Proposition \ref{non-triviality kernel} with $P(D)  = \overline{\partial}$.
 
  \((iv)\implies (iii)\).  We follow the same idea as in the proof of \cite[Proposition 4.2]{debrouwere2018non}. Fix \(a\in (0,1)\) and choose \(h>0\) such that \(h < a\min\{\inf_{t\in\mathbb{R}} F(t), \inf_{t\in \mathbb{R}} G(t)\}\). Applying \cite[Lemma 3.4]{debrouwere2018non}, we get that any \(f \in \mathcal{H}(T_h) \setminus \{0\}\) that is continuous and bounded on \(\overline{T_h}\) must satisfy
    \[
    -\infty < \int_{-\infty}^{\infty} \log\lvert f(t)\rvert e^{-\pi \lvert t \rvert / h}\mathrm{d}t .
    \]
    Choose  \(f \in \mathcal{U}_W(T^{F,G})\setminus \{0\}\). For all \(N \in \mathbb{N}\), 
    \begin{align*}
        -\infty &< \int_{-\infty}^{\infty} \log \lvert f(t)\rvert e^{-\pi \lvert t \rvert / h} \mathrm{d}t \\
        &= \int_{-\infty}^{\infty} \log \lvert e^{w_N(N\lvert t \rvert)}f(t)\rvert e^{-\pi \lvert t \rvert / h}\mathrm{d}t - \int_{-\infty}^{\infty}w_N(N\lvert t \rvert)e^{-\pi \lvert t \rvert / h}\mathrm{d}t \\
        &\leq \log\left( \sup_{\xi \in \overline{T^{aF, aG}}} e^{w_N(N\lvert \mathrm{Re \ }\xi\rvert)} \lvert f(\xi)\rvert \right) \int_{-\infty}^{\infty} e^{-\pi \lvert t \rvert / h}\mathrm{d}t - \frac{1}{N}\int_{-\infty}^{\infty}w_N(\lvert t \rvert) e^{-\pi \lvert t \rvert / (Nh)}\mathrm{d}t
    \end{align*}
    Since $W$ satisfies \((\alpha)\), we obtain 
     that \(\sup_{\xi \in \overline{T^{aF, aG}}}e^{w_N(N \lvert\mathrm{Re \ }\xi\rvert)} \lvert f(\xi)\rvert < \infty\). Therefore, 
    \[
    \int_{-\infty}^{\infty} w_N(\lvert t\rvert) e^{-\pi \lvert t \rvert/(Nh)} \mathrm{d}t = 2\int_0^{\infty} w_N(t) e^{-\pi t / (Nh)}\mathrm{d}t < \infty .
    \] 
   Since \(N \in \mathbb{N}\) was arbitrary, we may conclude that \(W\) satisfies \((\epsilon)_0\).

 \((i)\implies (ii)\).    By complex conjugation, the operator \(\displaystyle 2\partial = \frac{\partial}{\partial x} - i\frac{\partial}{\partial y}\) is surjective on \(\mathcal{K}_{W}(T^{F,G})\) as well. Since \(\Delta = 4\partial \overline{\partial}\), the result follows.
 
  \((ii)\implies (iv)\).   Proposition \ref{non-triviality kernel} with $P(D) = \Delta$ tells us that there is a harmonic function in \(\mathcal{K}_{W}(T^{F,G})\) that is 
 non-identically zero. By passing to its real or imaginary part, we can find a real-valued harmonic function \(u \in \mathcal{K}_{W}(T^{F,G}) \backslash \{0 \}\). Define \(\displaystyle v = \frac{\partial u}{\partial x} - i \frac{\partial u}{\partial y} \in \mathcal{K}_{W}(T^{F,G})\). Then, \    $
2\overline{\partial} v= \Delta u=0.
    $
     Thus \(v \in \mathcal{U}_{W}(T^{F,G})\). Moreover, \(v\) is non-identically zero; it is therefore our sought non-trivial element of  \(\mathcal{U}_{W}(T^{F,G})\). Indeed, if $v$ would identically vanish, then \(\nabla u = 0\) and  \(u\) would be constant. But  $u$ vanishes at infinity, what would lead to $u \equiv 0$,  a contradiction.
 \end{proof}

\end{document}